\newtheorem{theorem}{Theorem}[section]
\newtheorem{lemma}[theorem]{Lemma}
\newtheorem{corollary}[theorem]{Corollary}
\theoremstyle{definition}
\newtheorem{question}[theorem]{Question}
\newtheorem{problem}[theorem]{Problem}
\theoremstyle{remark}
\numberwithin{equation}{section}
     \DeclareMathOperator{\Aut}{Aut}
    \DeclareMathOperator{\malg}{MALG}
    \DeclareMathOperator{\id}{Id}
    \DeclareMathOperator{\gp}{\bf GP}
    \DeclareMathOperator{\lo}{\bf LO}
    \DeclareMathOperator{\lm}{\mathsmaller{LM}}
    \DeclareMathOperator{\bm}{\mathsmaller{BM}}
    \DeclareMathOperator{\ma}{MA}
    \DeclareMathOperator{\mep}{mp}
    \DeclareMathOperator{\fin}{FIN}
    \DeclareMathOperator{\co}{co}
    \newcommand{\restrict}{\upharpoonright}
    \newcommand{\actson}{\curvearrowright}
\def\R{{\mathbb R}}
\def\N{{\mathbb N}}
\begin{document}

\title[On the pointwise implementation of near-actions]{On the pointwise implementation of near-actions}

\author{Asger T\"ornquist}
\address{Kurt G\"odel Research Center, University of Vienna, W\"ahringer Strasse 25, 1090 Vienna, Austria}
\email{asger@logic.univie.ac.at}
\thanks{Research supported by the Austrian Science Foundation FWF grant no. P19375-N18.}

\subjclass[2000]{03E15, 37A05}


\keywords{Ergodic theory; near-actions; spatial actions; descriptive
set theory}

\begin{abstract}
We show that the continuum hypothesis implies that every measure
preserving near-action of a group on a standard Borel probability space
$(X,\mu)$ has a pointwise implementation by Borel measure preserving
automorphisms.
\end{abstract}

\maketitle

\section{Introduction}

{\bf (A)} Let $(X,\mu)$ be a standard non-atomic Borel probability
space. A Borel bijection $f:X\to X$ is {\it measure preserving}
(m.p.) if for all Borel $A\subseteq X$, $\mu(f^{-1}(A))=\mu(A)$. In
this paper the group of measure preserving Borel bijections on
$(X,\mu)$ will be denoted $G_{\mep}(X,\mu)$. Let
$$
I_{\mep}(X,\mu)=\{f\in G_{\mep}(X,\mu): f(x)=x \text{ a.e.}\},
$$
that is, $I_{\mep}(X,\mu)$ is the (normal) subgroup of all those
measure preserving bijections that fix all but a null set of points
in $X$. The cosets in
$$
G_{\mep}(X,\mu)/I(X,\mu)
$$
are usually called {\it measure preserving transformations}, and
they form a group denoted $\Aut(X,\mu)$. The group
$\Aut(X,\mu)$ is a Polish group when equipped with the {\it weak}
topology, i.e. the topology it inherits when naturally identified
with a subgroup of the unitary group of $L^2(X,\mu)$, see
\cite{halmos56}. We will denote by $\bar\varphi$ the canonical
homomorphism of $G_{\mep}(X,\mu)$ onto $\Aut(X,\mu)$.

The present paper is concerned with the following fundamental
question:

\begin{question}[``The lifting problem for $\Aut(X,\mu)$'']
Is it possible to find a homomorphisms $h:\Aut(X,\mu)\to
G_{\mep}(X,\mu)$ such that $\bar\varphi\circ h(T)=T$ for all measure
preserving transformations $T$? That is, does the identity
automorphism $\id:\Aut(X,\mu)\to\Aut(X,\mu)$ ``split'',
$$
\id: \Aut(X,\mu)\overset{h}{\longrightarrow}
G_{\mep}(X,\mu)\overset{\bar\varphi}{\longrightarrow}\Aut(X,\mu).
$$\label{mainquestion}
\end{question}

We will call such a map $h$ a (homomorphic) {\it lifting} of the
identity automorphism on $\Aut(X,\mu)$. Our main result is:

\begin{theorem}
Assume that the Continuum Hypothesis, CH, holds. Then the identity
automorphism $\id:\Aut(X,\mu)\to\Aut(X,\mu)$ splits, that is, there
is a homomorphic lifting $h:\Aut(X,\mu)\to G_{\mep}(X,\mu)$ such
that $\bar\varphi\circ h=\id$.\label{mainthm1}
\end{theorem}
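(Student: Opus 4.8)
The plan is to use CH to run a transfinite recursion of length $\omega_1$. Under CH we have $|\Aut(X,\mu)|=2^{\aleph_0}=\aleph_1$, so $\Aut(X,\mu)$ can be written as the union of a continuous increasing chain $(G_\alpha)_{\alpha<\omega_1}$ of \emph{countable} subgroups, with $G_\lambda=\bigcup_{\alpha<\lambda}G_\alpha$ at limit $\lambda$. I would construct a coherent family of homomorphic liftings $h_\alpha:G_\alpha\to G_{\mep}(X,\mu)$, meaning $\bar\varphi\circ h_\alpha=\id$ on $G_\alpha$ and $h_\beta\restrict G_\alpha=h_\alpha$ for $\alpha<\beta$; the desired lifting is then $h=\bigcup_{\alpha<\omega_1}h_\alpha$. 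Limit stages are handled by taking unions, which is a homomorphism precisely because of coherence, so all the content lies in the successor step.

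The successor step is the core. Given a homomorphic lifting $h$ of a countable group $H$ and a single $T\in\Aut(X,\mu)$, I must extend $h$ to a homomorphic lifting of $K=\langle H,T\rangle$ \emph{without altering $h$ on $H$}. Since $h$ is a homomorphism into $G_{\mep}(X,\mu)$, the maps $\{h(g):g\in H\}$ compose pointwise and thus form a genuine pointwise action of $H$ on $X$. Choosing any representative $S\in G_{\mep}(X,\mu)$ of $T$, every relator of $K$ is satisfied \emph{almost everywhere} by the substituted maps, because $\bar\varphi$ sends each relator to the identity of $\Aut(X,\mu)$. Letting $N\subseteq X$ be the null set of points at which some relator-word fails, and saturating $N$ under the countable group generated by $\{h(g):g\in H\}\cup\{S\}$, one obtains a conull, invariant, Borel set $Y$ on which $g\mapsto h(g)\restrict Y$, $T\mapsto S\restrict Y$ defines a bona fide pointwise $K$-action.

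The difficulty, and the step on which the whole theorem turns, is the leftover null, $H$-invariant set $Z=X\setminus Y$. Coherence forbids changing $h$ on $H$, so the $H$-action on $Z$ is frozen, and I must produce a Borel bijection $\tilde T$ of $X$ agreeing with $S$ on $Y$ and realizing all relations of $K$ \emph{exactly} on $Z$ as well. This is an exact, same-space extension problem for Borel actions, and for an arbitrary fixed $H$-action on $Z$ it need not have a solution: a relation such as $T^2=g$ demands a Borel square root of $h(g)\restrict Z$, which can fail to exist (a single transposition has none). Hence I cannot keep $h$ literally unchanged at every successor step by brute force, and I also cannot repair the defect by shrinking to a conull subset at each of the $\omega_1$ stages, since a naive decreasing $\omega_1$-chain of conull sets can intersect in a null (even empty) set. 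The resolution must exploit that $Z$ is $\mu$-null, so that no measure constraint is present there and there is maximal freedom to arrange the orbit structure of the $H$-action (ideally anticipated when $H$ is built, before it is frozen), and it must sequence the $\omega_1$ repairs so that they do not accumulate — and this is exactly where CH is indispensable. Reconciling strict homomorphic coherence across all $\omega_1$ stages with the local need to realize relations on null sets is the crux.

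Finally, the statement for an arbitrary measure preserving near-action follows from the same construction: a near-action is a homomorphism of a group $\Gamma$ into $\Aut(X,\mu)$, and composing it with the lifting $h$ yields the required pointwise implementation by elements of $G_{\mep}(X,\mu)$; the case $\Gamma=\Aut(X,\mu)$ with the identity near-action, treated above, is the universal one.
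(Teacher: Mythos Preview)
Your framework matches the paper's: transfinite recursion of length $\omega_1$ along a continuous chain of countable subgroups, limit stages by union, all content in the successor step. You also locate the difficulty exactly where it is---extending the frozen $H$-action on the leftover null, $H$-invariant Borel set $Z$ to a $K$-action, where an arbitrary Borel $H$-action on $Z$ need not admit such an extension. Your diagnosis that the resolution must ``anticipate'' the structure on $Z$ when $H$ is first built is precisely the right instinct.

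What is missing is the mechanism that realizes this anticipation; without it the proposal is an outline, not a proof. The paper supplies two concrete devices. First, a \emph{rearrangement lemma}: if $H\leq G$ are countable and the $H$-action on a Borel space decomposes as $X_0\sqcup\bigsqcup_i (2^{\N})^{G_i}$ with $G\leq G_i$ and $H$ acting by the shift on each $(2^{\N})^{G_i}$, then the $H$-action extends to a Borel $G$-action on the \emph{same} space (proved by a Hilbert-hotel shuffle using the universality of shifts among Borel $H$-actions). Second, to guarantee this hypothesis at every successor stage, the construction maintains on a fixed null set a \emph{reservoir} on which $H_\alpha$ acts by shifts $(2^{\N})^G$ simultaneously for \emph{all} countable groups $G\supseteq H_\alpha$; this is done via a Borel space $\mathcal S_\alpha$ of pairs (well-order, group) encoding all such $G$, with the ``master action'' $\sigma_\alpha$ being the shift fiberwise. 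At stage $\alpha=\beta+1$ one peels off countably many fibers from the reservoir to absorb the bad null set via the rearrangement lemma, and the remaining (still uncountable) reservoir continues to serve all later stages. This reservoir bookkeeping is the substantive idea that your proposal gestures at but does not supply, and it is what prevents the $\omega_1$ many null-set repairs from interfering with one another.
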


\bigskip

{\bf (B)} The motivation behind Question \ref{mainquestion} comes
from ergodic theory, where the group $\Aut(X,\mu)$ is of fundamental
interest. In the study of measure preserving dynamical systems, an
important distinction is made between {\it spatial actions} on the
one hand, and {\it near-actions} on the other hand. A spatial
measure preserving action of a group $G$ is simply an action
$\rho:G\times X\to X$ of $G$ on $X$ (sometimes written $\rho:
G\actson X$) in the usual sense, that additionally preserves the
measure, i.e. for all Borel $B\subseteq X$ we have
$$
\mu(B)=\mu(\rho(g)(B)).
$$
Here it is part of our assumption on $\rho$ that $\mu(\rho(g)(B))$
is $\mu$-measurable for each $g\in G$ and Borel $B\subseteq X$. If
we additionally assume that $\rho(g,\cdot):x\mapsto \rho(g,x)$ is a
Borel function for each $g\in G$ (that is, $\rho$ corresponds to a
homomorphism of $G$ into $G_{\mep}(X,\mu)$), then we will call
$\rho$ a \emph{spatial action by (measure preserving) Borel
automorphisms}.

In most cases of interest $G$ will be a Polish group, or at least a
topological group, in which case it may be natural to further
require that $\rho:G\times X\to X$ be Borel. We call such an action
$\rho$ a \emph{Borel spatial action} (following the conventions of
\cite{gltswe05}.)

On the other hand, a {\it near-action} of $G$ is a map $\eta:
G\times X\to X$ such that
\begin{enumerate}
\item If $e\in G$ is the identity element then $\eta(e)(x)=x$ for almost all
$x$.
\item For all $g,h\in G$, $\eta(gh)(x)=\eta(g)(\eta(h)(x))$ for almost
all $x$.
\item For all Borel $B\subseteq X$ and $g\in G$,
$\mu(B)=\mu(\eta (g)(B))$.
\end{enumerate}
Again, it is part of the definition of a near-action that
$\eta(g)(B))$ is measurable for all $g\in G$. If $G$ is a
topological group we call $\eta$ a measure preserving {\it Borel
near-action} if $\eta$ is Borel as a function $\eta:G\times X\to X$.

The above notions of spatial action and near-action and the related
concepts may be generalized to their {\it measure class preserving} counterparts in the
obvious way.

There is a direct correspondence between the near-actions of a
group $G$ and the homomorphisms of $G$ into $\Aut(X,\mu)$: Each
element of $g\in G$ defines through $x\mapsto \eta(g)(x)$ a
measurable m.p. bijection almost everywhere, and thus an element of
$T_g\in \Aut(X,\mu)$. Condition (2) of the definition of a
near-action ensures that $g\mapsto T_g$ is a homomorphism, which is
uniquely determined by $\eta$. If on the other hand $f:
G\to\Aut(X,\mu)$ is a homomorphism, we obtain a near-action of
$G$ by picking a representative in $G_{\mep}(X,\mu)$ for $f(g)$, for
each $g\in G$. There are of course many near-actions corresponding
to $f$.

If $\eta:G\times X\to X$ is a near-action, we call a (Borel)
spatial action $\rho:G\times X\to X$ a (Borel) {\it spatial model} of
$\eta$ if
$$
\eta(g,x)=\rho(g,x) \text{ $\mu$ a.e. } x.
$$
The relationship between homomorphisms and liftings on the one hand,
and near-actions and Borel spatial models on the other hand is
clear: If $\eta:G\times X\to X$ is a near-action, and
$f:G\to\Aut(X,\mu)$ is the corresponding homomorphism, then $\eta$ has a spatial model if and only if there is a homomorphism $h:
G\to G_{\mep}(X,\mu)$ such that
$$
\bar\varphi\circ h=f.
$$
Thus similarly to Question \ref{mainquestion} we may ask:

\begin{question}
If $\eta:G\times X\to X$ is a m.p. near-action of a group $G$, does
$\eta$ have a spatial model $\rho$ acting by Borel automorphisms?
\end{question}

If above we require that $\eta$ is a Borel near-action, and ask for
a Borel spatial model, this is a well-studied question which has a
negative solution in general: It was shown by Glasner, Tsirelson and
Weiss \cite{gltswe05} that there are Polish groups which do not
admit {\it any} non-trivial m.p. Borel spatial actions. A particular
instance of this is the group $\Aut(X,\mu)$ itself, but Glasner,
Tsirelson and Weiss' results encompass all so-called {\it Levy
groups}, see \cite{gltswe05} and \cite{glwe05}. On the other hand, a
classical theorem of Mackey \cite{mackey62} shows that if $G$ is a
locally compact group, then any Borel m.p. near-action of $G$ has a
Borel spatial model. This result was recently extended by
Kwiatkowska and Solecki \cite{kwso09} to apply to all isometry
groups of locally compact metric spaces.

However, in sharp contrast to the Glasner-Tsirelson-Weiss result, we
will show the following:

\begin{theorem}
Suppose $G$ is a group of cardinality at most $\aleph_1$. Then every m.p.
near-action of $G$ on a standard Borel probability space $(X,\mu)$
has a spatial model which acts by Borel m.p. automorphisms. Equivalently,
if $f:G\to\Aut(X,\mu)$ is a homomorphism, then there is a
homomorphism $h:G\to G_{\mep}(X,\mu)$ such that $\bar\varphi\circ
h=f$. In particular, if CH holds then all m.p. near-actions have a
spatial model acting by m.p. Borel automorphisms.\label{mainthm2}
\end{theorem}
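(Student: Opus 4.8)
The plan is to deduce the statement from a single extension lemma via a transfinite construction of length $\omega_1$. First note that the ``in particular'' clause is immediate: under CH one has $|\Aut(X,\mu)| = 2^{\aleph_0} = \aleph_1$, so applying the main assertion to $G = \Aut(X,\mu)$ and $f = \id$ produces the homomorphic lifting of Theorem \ref{mainthm1}. It therefore suffices to treat a group $G$ with $|G|\le\aleph_1$. Enumerate $G = \{g_\alpha : \alpha<\omega_1\}$ (with repetitions if $|G|<\aleph_1$) and set $H_\alpha = \langle g_\beta : \beta<\alpha\rangle$. Each $H_\alpha$ is countable, the chain $(H_\alpha)_{\alpha<\omega_1}$ is increasing and continuous (i.e. $H_\lambda = \bigcup_{\alpha<\lambda}H_\alpha$ at limits $\lambda$), and $\bigcup_{\alpha<\omega_1}H_\alpha = G$. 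I would build a coherent family of genuine homomorphisms $h_\alpha : H_\alpha\to G_{\mep}(X,\mu)$ with $\bar\varphi\circ h_\alpha = f\restriction H_\alpha$ and $h_\beta\restriction H_\alpha = h_\alpha$ for $\alpha<\beta$; since every relation in $G$ involves only finitely many elements, the union $h = \bigcup_{\alpha<\omega_1}h_\alpha$ is then the desired lifting. Limit stages are handled by taking unions, so all the work is in the successor step.

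The successor step is the following \emph{extension lemma}: if $H\le K$ are countable groups, $f:K\to\Aut(X,\mu)$ is a homomorphism, and $h:H\to G_{\mep}(X,\mu)$ is a genuine homomorphism with $\bar\varphi\circ h = f\restriction H$, then $h$ extends to a genuine homomorphism $\tilde h:K\to G_{\mep}(X,\mu)$ with $\bar\varphi\circ\tilde h = f$. To prove it I would first invoke Mackey's theorem (a countable discrete group is locally compact, and after choosing Borel representatives the near-action is Borel) to obtain \emph{some} genuine lifting $h_0:K\to G_{\mep}(X,\mu)$ of $f$, acting honestly on all of $X$. The two homomorphisms $h$ and $h_0\restriction H$ lift the same $f\restriction H$, so for each $a\in H$ the bijections $h(a)$ and $h_0(a)$ agree off a null set. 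Let $N$ be a Borel null set, invariant under the countable family $h_0(K)$, that contains $\{x : h(a)(x)\ne h_0(a)(x)\}$ for every $a\in H$, and put $C = X\setminus N$. On the conull invariant piece $C$ the two liftings coincide on $H$ and $h_0\restriction C$ is an honest action of $K$; I set $\tilde h(k) = h_0(k)$ there. On the null invariant piece $N$ the map $a\mapsto h(a)\restriction N$ is an honest Borel action of $H$, and what remains is to extend it to a Borel action of $K$ on $N$, which one then uses to define $\tilde h(k)\restriction N$. Because $N$ carries zero measure, any such extension automatically keeps $\tilde h(k)$ measure preserving and in the $\Aut(X,\mu)$-class $f(k)$, so $\bar\varphi\circ\tilde h = f$ and $\tilde h\restriction H = h$ exactly on all of $X$.

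Thus the entire problem is reduced to a purely Borel, measure-free question: extend a Borel action of a countable group $H$ on a standard Borel space to a Borel action of a countable overgroup $K$ on the same space. This is the main obstacle. It is not unconditionally solvable --- finite orbits can create orbit-counting obstructions (e.g. extending $\Z/2$ to $\Z/4$ forces a pairing of orbits) --- so the real content is to arrange, as a strengthened induction hypothesis carried through the $\omega_1$ construction, that the committed lifting has a sufficiently homogeneous orbit structure on null sets: concretely, that the relevant null set can be taken uncountable and the action on it free. A free Borel action of a countable group on an uncountable standard Borel space does extend to a free Borel action of any countable overgroup --- this is a Borel transversal/orbit-grouping argument, using that both orbit spaces have cardinality $2^{\aleph_0}$ --- and maintaining this freeness invariant coherently, while keeping every $h_\alpha$ an exact honest homomorphism of all of $X$, is where the difficulty of the proof is concentrated. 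I expect this bookkeeping to be the crux, and I note that the bound $\aleph_1$ enters precisely because at every stage $\alpha<\omega_1$ only countably many constraints have accumulated, so that countable intersections of conull sets remain conull and the null exceptional sets never swallow the space.
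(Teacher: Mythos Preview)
Your overall architecture matches the paper's: a transfinite induction along a continuous $\omega_1$-chain of countable subgroups $H_\alpha$, with all content in the successor step, which you correctly isolate as the problem of extending a Borel $H$-action on a null Borel piece $N$ to a Borel $K$-action on the \emph{same} $N$. The reduction to this purely Borel question (via Mackey, or more cheaply via the elementary word-argument that is the paper's Lemma \ref{extend}) is sound.

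The gap is in the invariant you propose to carry. Your assertion that ``a free Borel action of a countable group on an uncountable standard Borel space extends to a free Borel action of any countable overgroup --- this is a Borel transversal/orbit-grouping argument'' does not stand: free Borel actions of infinite countable groups need not admit Borel transversals (the orbit equivalence relation need not be smooth --- an irrational rotation already shows this), so there is no standard Borel orbit space on which to perform a pairing. Freeness by itself does not supply enough structure to guarantee an extension on the same underlying set, and there is no evident way to maintain it coherently through $\omega_1$ stages while also keeping the null piece uncountable.

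The paper replaces your freeness invariant with a sharper one. Instead of freeness, one maintains on the null side an explicit \emph{reservoir of shift actions}: a Borel portion of the null set is kept Borel isomorphic, as an $H_\alpha$-space, to a disjoint union $\bigsqcup_{i}(2^\N)^{G_i}$ over countable overgroups $G_i\geq H_\alpha$ (arranged via the ``master action'' $\sigma_\alpha$ on $\mathcal M_\alpha$ so that every future $H_\beta$ still embeds in each $G_i$), with $H_\alpha$ acting by the shift on each factor. The Rearrangement Lemma (Lemma \ref{onestep}) then does the successor step: given an $H$-action on $X_0\sqcup\bigsqcup_i (2^\N)^{G_i}$ that is arbitrary on $X_0$ and the shift on the rest, one embeds $X_0$ $H$-equivariantly into $(2^\N)^{G_1}$ by the universality of the shift (Lemma \ref{universal}) and then performs a Hilbert-hotel shuffle among the $(2^\N)^{G_i}$ to absorb the embedded copy while leaving the total space fixed; on the resulting re-partition the $H$-action is now a union of shifts, and these extend to $K$ for free. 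This reservoir-of-shifts invariant, together with the master-action bookkeeping that makes enough overgroups $G_i$ available at every future stage, is the idea your sketch is missing.
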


Of course, Theorem \ref{mainthm1} is a direct consequence of Theorem
\ref{mainthm2}. Theorem \ref{mainthm2} holds more generally for measure class preserving near-actions, see (C) below.

\medskip

It is natural to also consider the easier question of simply having
a (measurable) spatial model of a near-action. Here it turns out
that CH can be replaced with Martin's Axiom:

\begin{theorem}
Assume that Martin's Axiom, MA, holds. Then every m.p. near-action
on a standard Borel probability space has a spatial model.
\label{leb}
\end{theorem}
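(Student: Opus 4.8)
The plan is to realize the near-action as a genuine action on a conull invariant set and then extend trivially across the null complement. By the correspondence explained above, $\eta$ determines a homomorphism $f\colon G\to\Aut(X,\mu)$, and a spatial model is exactly a homomorphism $h$ into the group $G^{\ast}$ of \emph{all} (not necessarily Borel) $\mu$-preserving bijections of $X$ with $\bar\varphi\circ h=f$; note that here, unlike in Theorem \ref{mainthm2}, we are allowed non-Borel maps, and $\Aut(X,\mu)=G^{\ast}/I^{\ast}$ where $I^{\ast}$ is the group of bijections that fix all but a null set of points. Setting $h$ equal to $\id$ on $K=\ker f$, it suffices to lift the inclusion of $\bar G:=G/K$ into $\Aut(X,\mu)$; since $\bar G$ embeds into $\Aut(X,\mu)$ we have $|\bar G|\le 2^{\aleph_0}$. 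Thus I may assume $G\le\Aut(X,\mu)$ with $|G|\le 2^{\aleph_0}$ and $f$ the inclusion.

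The only input from MA I plan to use is its standard consequence that the null ideal is $2^{\aleph_0}$-additive, i.e.\ the union of fewer than $2^{\aleph_0}$ Lebesgue null sets is null, together with the regularity of $2^{\aleph_0}$. For each $g\in G$ choose a genuine $\mu$-preserving Borel bijection $S_g\colon X\to X$ representing $f(g)$, with $S_e=\id$ and $S_g=\eta(g)$ a.e. Then for all $g,h\in G$ the cocycle set $N_{g,h}=\{x:S_{gh}(x)\neq S_g(S_h(x))\}$ is null, and the whole difficulty is to promote these a.e.\ identities to genuine ones.

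Suppose first that $|G|<2^{\aleph_0}$. Then $N:=\bigcup_{g,h}N_{g,h}$ is null by $2^{\aleph_0}$-additivity, and so is $N\cup\bigcup_{g}S_g^{-1}(N)$; put $Y=X\setminus(N\cup\bigcup_{g}S_g^{-1}(N))$, a conull Borel set. A direct computation shows $Y$ is invariant: if $x\in Y$ then $S_g(x)\notin N$ by definition, while for every $h$ one has $x\notin N\supseteq N_{h,g}$, so $S_h(S_g(x))=S_{hg}(x)\notin N$, giving $S_g(x)\in Y$. Since $Y\subseteq X\setminus N$ the cocycle identity holds on $Y$, so $(g,x)\mapsto S_g(x)$ is an honest $\mu$-preserving action of $G$ on $Y$ agreeing with $\eta$ a.e., and each $S_g$ restricts to a bijection $Y\to Y$ with inverse $S_{g^{-1}}$. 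Extending every $S_g$ by the identity on the invariant null set $X\setminus Y$ yields genuine $\mu$-preserving bijections of $X$ forming an honest action (bijections supported on a null set are automatically $\mu$-preserving), hence a spatial model of $\eta$.

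The main obstacle is the boundary case $|G|=2^{\aleph_0}$, which genuinely occurs because $\Aut(X,\mu)$ has subgroups of full cardinality; there $\bigcup_{g,h}N_{g,h}$ is a union of $2^{\aleph_0}$ null sets and need not be null, so the one-shot argument breaks down. I would handle this by a transfinite recursion of length $2^{\aleph_0}$, which is legitimate since MA makes $2^{\aleph_0}$ regular: enumerate $G=\{g_\alpha:\alpha<2^{\aleph_0}\}$, let $G_\alpha=\langle g_\beta:\beta<\alpha\rangle$ (so $|G_\alpha|<2^{\aleph_0}$), and build a coherent chain of honest actions on all of $X$. The delicate step is the successor: to extend an honest action $\rho$ of $H=G_\alpha$ to $H'=\langle H,t\rangle$, form the honest action $\tau$ of the free product $H\ast\langle t\rangle$ with $\tau|_H=\rho$ and $\tau(t)$ any representative of $f(t)$; since the relation subgroup $N=\ker(H\ast\langle t\rangle\to H')$ has size $<2^{\aleph_0}$ and acts trivially a.e., the additivity argument of the previous paragraph produces a conull $\tau$-invariant set on which $N$ acts trivially, and there the action descends to the desired $H'$-action extending $\rho$. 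The one point requiring care is the gluing across the fixed null complement, where one must extend the given $H$-action by an $H'$-action as an algebraic extension-of-action problem; I expect this to be the technical heart, to be arranged by keeping the construction sufficiently free so that such equivariant extensions always exist. Taking unions at limits and at the end yields the honest $G$-action, i.e.\ the spatial model.
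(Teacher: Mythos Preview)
Your overall strategy matches the paper's: reduce to a subgroup $G\le\Aut(X,\mu)$, use MA only through $2^{\aleph_0}$-additivity of the null ideal (and regularity of $2^{\aleph_0}$), and run a transfinite induction along an enumeration $G=\{g_\alpha:\alpha<2^{\aleph_0}\}$ with $G_\alpha=\langle g_\beta:\beta<\alpha\rangle$. Your one-shot argument for $|G|<2^{\aleph_0}$ is correct (one small slip: $Y$ is conull and Lebesgue measurable, not Borel, once $|G|>\aleph_0$; this is harmless). The free-product description of the successor step is also exactly Lemma~\ref{extend2}.

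The genuine gap is precisely where you flag it. At a successor step you have an honest $H$-action $\rho$ on $X$, a conull $\rho$- and $\theta$-invariant $Z$, and an extension $\rho^0$ to $H'=\langle H,t\rangle$ on $Z$; you then need an $H'$-action on the null set $X\setminus Z$ extending $\rho\restrict(X\setminus Z)$. You write that this ``should be arranged by keeping the construction sufficiently free,'' but this is not a proof, and the problem is real: an $H$-action on a fixed set need \emph{not} extend to an $H'$-action on the same set. For instance, inside $\Aut(X,\mu)$ one can have $H=\langle a\rangle$, $H'=\langle a,b\rangle$ with $bab^{-1}=a^2$; if at some earlier stage $\rho(a)$ acquired a single infinite orbit on a null piece, no bijective $b$ on that piece can satisfy the relation. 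Your framework of maintaining actions on \emph{all} of $X$ throughout leaves no room to absorb the obstruction.

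The paper's resolution is concrete and is exactly the missing ingredient. Before the induction one partitions $X=X_0\sqcup\bigsqcup_{0<\alpha<\kappa}X_\alpha$ with $\mu(X_0)=1$ and each $X_\alpha$ a null set of cardinality $2^{\aleph_0}$; one then builds actions $\rho_\alpha$ of $H_\alpha$ on \emph{increasing} conull sets $Y_\alpha$, with $Y_\alpha\cap X_\gamma=\emptyset$ for $\gamma>\alpha$, so that unused reservoir pieces are always available. At the successor step, Lemma~\ref{universal2} (using $\ma(|H_\alpha|)$ to get $2^{|H_\alpha|}=2^{\aleph_0}$) embeds the $H_\gamma$-action on $Y_\gamma\setminus Z$ equivariantly into the shift $H_\gamma\actson(2^\N)^{H_\alpha}$; one extends the embedding to a bijection with $(2^\N)^{H_\alpha}$ using a fresh set $W\subseteq X_\alpha$, and pulls back the $H_\alpha$-shift. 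This is the precise meaning of ``sufficiently free,'' but it forces the bookkeeping of growing domains and a pre-reserved supply of null pieces that your outline does not set up.
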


\bigskip

{\bf (C)} After reading a note circulated by this author early in
the summer of 2009 and containing the proof of Theorem
\ref{mainthm1}, David Fremlin was kind enough to point out to the
author that his proof shows something much more general than Theorem
\ref{mainthm1}.

Indeed, let $X$ be a standard Borel space, and let $\mathcal B(X)$
denote the $\sigma$-algebra of Borel sets on $X$. Let $\mathcal J$
be an ideal in $\mathcal B(X)$. Then define $G(X,\mathcal J)$ to be
the group consisting of those Borel bijections $f:X\to X$ such that
$f^{-1}(B)\in \mathcal J$ if and only if $B\in\mathcal J$, for all
$B\in \mathcal J$. Then we may form the group
$$
I(X,\mathcal J)=\{f\in G(X,\mathcal J): \{x\in X:f(x)\neq
x\}\in\mathcal J\}.
$$
Clearly $I(X,\mathcal J)$ is a normal subgroup of $G(X,\mathcal J)$,
and so we may consider the quotient group
$$
\Aut(X,\mathcal J)=G(X,\mathcal J)/I(X,\mathcal J).
$$
Denote by $\varphi_{\mathcal J}:G(X,\mathcal J)\to \Aut(X,\mathcal
J)$ the corresponding homomorphism with kernel $I(X,\mathcal J)$.
Then Theorem \ref{mainthm2} has the following generalization:

\begin{theorem}
Suppose $\mathcal J$ is a $\sigma$-ideal in $\mathcal B(X)$ such
that there is at least \emph{one} uncountable Borel set $B\in
\mathcal J$. Then if $G\leq \Aut(X,\mathcal J)$ is any subgroup at
most of cardinality $\aleph_1$, then there is a homomorphism $h:
G\to G(X,\mathcal J)$ such that $\varphi_{\mathcal J}\circ
h=\id_G$.\label{mainthm3}
\end{theorem}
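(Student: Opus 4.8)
The plan is to construct $h$ by transfinite recursion of length at most $\omega_1$, approximating $G$ from below by a continuous increasing chain of countable subgroups. Fix once and for all the uncountable Borel set $B\in\mathcal J$; being an uncountable Borel subset of a standard Borel space, $B$ is Borel isomorphic to $\mathbb R$ and has cardinality $2^{\aleph_0}\ge\aleph_1\ge |G|$. This set will serve as a \emph{reservoir}. The point is that any Borel bijection supported on a subset of $B$ (equal to the identity off $B$) automatically lies in $I(X,\mathcal J)$: it moves only points of $B\in\mathcal J$, and for $A\in\mathcal B(X)$ the sets $f^{-1}(A)$ and $A$ differ by a subset of $B$, so $f$ preserves $\mathcal J$. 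Hence we are free to prescribe \emph{arbitrary genuine actions} on Borel pieces of $B$ without disturbing the induced elements of $\Aut(X,\mathcal J)$. Note also that we do \emph{not} require $h$ to depend Borel-measurably on $g$ — only that each $h(g)$ be a Borel bijection and that $h$ be a homomorphism — which is exactly what lets us sidestep the Glasner--Tsirelson--Weiss obstruction. Now enumerate $G=\{g_\alpha:\alpha<\omega_1\}$ and set $G_\alpha=\langle g_\beta:\beta<\alpha\rangle$; since every $\alpha<\omega_1$ is countable, each $G_\alpha$ is a \emph{countable} subgroup, the chain is continuous at limits, and $\bigcup_{\alpha<\omega_1}G_\alpha=G$.

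I would build coherent homomorphic liftings $h_\alpha\colon G_\alpha\to G(X,\mathcal J)$ with $\varphi_{\mathcal J}\circ h_\alpha=\id_{G_\alpha}$ and $h_\beta\restrict G_\alpha=h_\alpha$ for $\alpha<\beta$, so that $h=\bigcup_\alpha h_\alpha$ is the desired lifting. Limit stages are immediate, since a union of coherent liftings of countable groups is a lifting of their union, and the base case $G_0=\{e\}$ is trivial. To make the successor steps possible I would carry an auxiliary invariant: at stage $\alpha$ a Borel subset $R_\alpha\subseteq B$ has been used, invariant under $h_\alpha(G_\alpha)$ and carrying a \emph{free} Borel $G_\alpha$-action with uncountably many orbits, while $B\setminus R_\alpha$ remains uncountable and untouched. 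The essential bookkeeping is that at every countable stage $\alpha<\omega_1$ only countably much of $B$ has ever been consumed, so the fresh part $B\setminus R_\alpha$ of the reservoir is always uncountable; this is precisely where the hypotheses $|G|\le\aleph_1$ and ``$B$ uncountable'' enter, and under CH they hold for all of $\Aut(X,\mathcal J)$.

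For the successor step, pass from $h_\alpha$ to a lifting of $G_{\alpha+1}=\langle G_\alpha,g_\alpha\rangle$. Choose any representative $f\in G(X,\mathcal J)$ of $g_\alpha$. The family $h_\alpha(G_\alpha)\cup\{f\}$ generates only a \emph{near}-action of $G_{\alpha+1}$: the failure of well-definedness of words and of the cocycle relations is, by countability of $G_{\alpha+1}$ together with countable additivity of $\mathcal J$, concentrated on a single set $N\in\mathcal J$. On the complement of a $G_{\alpha+1}$-saturation of $N$ — a co-$\mathcal J$ Borel set $Y$, invariant and with the action genuine and extending $h_\alpha$ — nothing further need be done; the entire defect has been pushed into the $\mathcal J$-set $D=X\setminus Y$. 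On $D$ the group $G_\alpha$ already acts genuinely via $h_\alpha$, and $D$ contains the free reservoir $R_\alpha$; the task is to extend this $G_\alpha$-action on $D$ to a genuine $G_{\alpha+1}$-action without altering $h_\alpha\restrict G_\alpha$. The algebraic input is that a free $G_\alpha$-action with uncountably many orbits extends to a free $G_{\alpha+1}$-action, because the countable index $[G_{\alpha+1}:G_\alpha]$ divides the (uncountable) number of orbits; more generally, the ``non-free'' defect $D\setminus R_\alpha$ is absorbed by equivariantly transporting the induced $G_{\alpha+1}$-set $G_{\alpha+1}\times_{G_\alpha}(D\setminus R_\alpha)$ into fresh uncountable space in $B\setminus R_\alpha$, which has room because it is an uncountable Borel set. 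Setting $R_{\alpha+1}$ to be $R_\alpha$ together with the newly used portion of $B$ restores the invariant.

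The main obstacle is exactly this successor extension, because two demands must be met at once: (i) $h_\alpha\restrict G_\alpha$ must be kept fixed, for coherence; and (ii) the new generator and \emph{all} relations it satisfies with $G_\alpha$ must be realized \emph{exactly}, not merely modulo $\mathcal J$. The two are reconciled by never touching the co-$\mathcal J$ genuine part $Y$ and confining every correction to the reservoir, whose persistent uncountability — guaranteed stage after stage for all $\alpha<\omega_1$ by the cardinality bookkeeping above — supplies enough room to host the required free, and more generally induced, $G_{\alpha+1}$-actions. The remaining points are routine: checking that the saturation of $N$ stays in $\mathcal J$, that the reservoir actions are Borel, and that the transported action genuinely restricts to $h_\alpha$ on $G_\alpha$.
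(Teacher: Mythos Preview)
Your overall strategy --- transfinite recursion of length $\omega_1$ along a continuous chain of countable subgroups, with a reservoir inside the uncountable $\mathcal J$-set $B$ to absorb successor-step defects --- matches the paper's. The gap is in your bookkeeping claim that ``only countably much of $B$ has ever been consumed, so $B\setminus R_\alpha$ is always uncountable.'' This is inconsistent with your own invariant: you require $R_\alpha$ to carry a free $G_\alpha$-action with uncountably many orbits, and the defect $D$ absorbed at each successor step is an arbitrary (typically uncountable) Borel $\mathcal J$-set, so the fresh space consumed to host $G_{\alpha+1}\times_{G_\alpha}(D\setminus R_\alpha)$ is uncountable as well. Nothing then prevents $\bigcup_{\alpha<\lambda}R_\alpha$ from exhausting $B$ at a limit $\lambda<\omega_1$. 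A secondary but related issue: the claim that a free Borel $G_\alpha$-action on $R_\alpha$ extends to a free Borel $G_{\alpha+1}$-action on the same set because ``the countable index divides the uncountable number of orbits'' is purely set-theoretic; making the regrouping of orbits into a \emph{Borel} map requires extra structure (e.g.\ smoothness of the orbit equivalence relation), which a generic free Borel action need not have.

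The paper supplies exactly the missing structure. The reservoir is given the product form $Y_\alpha\times\mathcal M_\alpha$, where $\mathcal M_\alpha=\mathcal S_\alpha\times(2^\N)^\N$ is a single Borel space coding \emph{all} shift actions $H_\alpha\actson(2^\N)^G$ over countable $G\supseteq H_\alpha$ (the ``master action''). Two things follow. First, at the successor step only \emph{countably many} points $y_i$ are removed from $Y_\alpha\subseteq 2^\N$ (one per shift slice fed into the Rearrangement Lemma), so at every $\lambda<\omega_1$ the set $Y_\lambda=\bigcap_{\alpha<\lambda}Y_\alpha$ is co-countable in $2^\N$ --- this is the correct version of your ``only countably much consumed'' claim. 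Second, because $\mathcal S_{\alpha+1}\subseteq\mathcal S_\alpha$ holds automatically and the action on each slice is already the shift, the $H_{\alpha+1}$-action on the surviving reservoir $Y_{\alpha+1}\times\mathcal M_{\alpha+1}$ is \emph{already defined} --- no orbit-regrouping is needed. It is this self-adjusting shift structure, not mere freeness, that carries the induction through limits.
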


If we take $\mathcal J$ to be the $\sigma$-ideal of $\mu$-null sets
of some standard Borel probability measure on $X$, then
we obtain the obvious generalization of Theorem \ref{mainthm2} to
{\it measure class} preserving near actions and spatial actions.
Another case of interest where Theorem \ref{mainthm3} applies is if
we take $X$ to be a Polish space, and let $\mathcal J$ be the ideal
of meagre sets.

\bigskip

{\bf (D)} Question \ref{mainquestion} belongs to a line of set
theoretic research into quotient structures and their homomorphisms.
It is most closely related to the lifting problem for the measure
algebra: Recall that if $\mathcal B(X)$ is the set of Borel subsets of
some standard Borel probability space $(X,\mu)$, and $\mathcal I(X)$
is the ideal of $\mu$ null sets in $\mathcal B(X)$, then
$$
\malg(X,\mu)=\mathcal B(X)/\mathcal I(X).
$$
Let $\tilde\varphi: \mathcal B(X)\to\malg(X,\mu)$ be the canonical
boolean algebra homomorphism with kernel $\mathcal I(X)$. It was
shown by von Neumann and Stone in \cite{vnst35} that under CH there
is a boolean algebra homomorphism $h:\malg(X,\mu)\to\mathcal B(X)$
(a ``lifting'') such that $\tilde\varphi\circ h$ is the identity on
$\malg(X,\mu)$, i.e that the identity homomorphism
$\id:\malg(X,\mu)\to\malg(X,\mu)$ `splits' over $\tilde\varphi$.
Later Shelah famously showed in \cite{shelah83} that it is
consistent with ZFC and $2^{\aleph_0}=\aleph_2$ that
$\id:\malg(X,\mu)\to\malg(X,\mu)$ {\it does not} split. On the other
hand, Carlson, Frankiewicz and Zbierski showed in \cite{cafrzb94}
that it is consistent with $2^{\aleph_0}=\aleph_2$ that there {\it
is} a lifting.

There has been many other interesting results regarding the
structure of quotient objects, perhaps the most well-known being the
structure of the automorphisms of the boolean algebra $\mathcal
P(\omega)/\fin$, see \cite{shelahpif}, \cite{shst88},
\cite{velickovic93}, \cite{farah00} and \cite{farah05}. Some
remarkable recent results regarding the similar problem for the
automorphisms of the Calkin Algebra $\mathcal C(H)=\mathcal
B(H)/\mathcal K(H)$, where $H$ is an infinite dimensional separable
Hilbert space and $\mathcal K(H)$ is the ideal of compact operators
in the $C^*$ algebra of all bounded operators $\mathcal B(H)$, has
been achieved by Phillips and Weaver \cite{phwe07}, and by Ilijas
Farah in \cite{farah09}.

The results quoted above regarding the lifting problem for
$\malg(X,\mu)$ suggest several similar problems for the lifting
problem for $\Aut(X,\mu)$. In the last section of this paper we
discuss these and other open problems that promise to be quite
interesting for future research.

\bigskip

{\bf (E)} The paper is organized into four sections, including the
introduction. \S 2 is dedicated to the proof of Theorem
\ref{mainthm3}. In \S 3 we prove Theorem \ref{leb}, that assuming
Martin's Axiom there is a Lebesgue measurable lifting of
$\Aut(X,\mu)$. Finally, in \S 4 we discuss several open questions
related to the theme of this paper.

\bigskip

{\it Acknowledgements.} I would like to thank David Fremlin for
carefully reading a very preliminary version of this paper, and for
pointing out the right generality in which the main theorem should
be stated. I would also like to thank Katherine Thompson for many
useful discussions about the ideas for this paper.
\section{Proof of the main theorem}

In this section we will prove Theorem \ref{mainthm3}, from which
Theorems \ref{mainthm1} and \ref{mainthm2} follow. We fix once and
for all a standard Borel space $X$ and a $\sigma$-ideal $\mathcal J$
in $\mathcal B(X)$. Denote by $\co(\mathcal J)$ the filter of all
$Z\in\mathcal B(X)$ such that $X\setminus Z\in\mathcal J$. Recall
that $\varphi_{\mathcal J}: G(X,\mathcal J)\to\Aut(X,\mathcal J)$
denotes the canonical epimorphism with kernel $I(X,\mathcal J)$.

The proof of Theorem \ref{mainthm3} requires several lemmata. As a
point of departure let us note the following easy fact:

\begin{lemma}
Let $H<\Aut(X,\mathcal J)$ be a countable group, and let
$T_0\in\Aut(X,\mathcal J)\setminus H$. Suppose $\rho:H \times X\to
X$ is an action by Borel automorphisms such that
$\varphi_{\mathcal J}(\rho(T,\cdot))=T$ for all $T\in H$. If
$\varphi_{\mathcal J}(\theta_0)= T_0$ then there is Borel set
$Z\subseteq X$ such that $Z\in\co(\mathcal J)$, $Z$ is invariant
under $\rho$ and $\theta_0$, and there is an action $\tilde
\rho:\langle H\cup\{T_0\}\rangle\times Z\to Z$ such that
$\tilde\rho(T_0,\cdot)=\theta_0\restrict Z$ and $\tilde
\rho\restrict H\times Z = \rho\restrict H\times Z$.\label{extend}
\end{lemma}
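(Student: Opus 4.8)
The plan is to build the extended action by choosing a single Borel representative for every element of the countable group $\Gamma=\langle H\cup\{T_0\}\rangle$, and then passing to a co-$\mathcal J$ invariant Borel set on which all the group relations hold \emph{exactly}. First I would fix, for each $\gamma\in\Gamma$, a representative $\hat\gamma\in G(X,\mathcal J)$ with $\varphi_{\mathcal J}(\hat\gamma)=\gamma$, subject to the normalizations $\hat T=\rho(T,\cdot)$ for $T\in H$, $\widehat{T_0}=\theta_0$, and $\hat e=\id$; such representatives exist since $\varphi_{\mathcal J}$ is onto. Note that $\Gamma$ is countable, being generated by the countable set $H\cup\{T_0\}$.

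Next, for each pair $\gamma_1,\gamma_2\in\Gamma$ the automorphisms $\hat\gamma_1\circ\hat\gamma_2$ and $\widehat{\gamma_1\gamma_2}$ both map to $\gamma_1\gamma_2$ under $\varphi_{\mathcal J}$, hence differ by an element of $I(X,\mathcal J)$; thus $N_{\gamma_1,\gamma_2}=\{x:\hat\gamma_1(\hat\gamma_2(x))\neq\widehat{\gamma_1\gamma_2}(x)\}$ lies in $\mathcal J$. Since $\Gamma$ is countable and $\mathcal J$ is a $\sigma$-ideal, the union $N=\bigcup_{\gamma_1,\gamma_2}N_{\gamma_1,\gamma_2}$ is again in $\mathcal J$.

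The key step is the choice of $Z$. Rather than simply deleting $N$, I would set $Z=\{x:\hat\gamma(x)\notin N \text{ for all }\gamma\in\Gamma\}=X\setminus\bigcup_{\gamma\in\Gamma}\hat\gamma^{-1}(N)$, i.e. the points whose entire $\Gamma$-orbit through the chosen representatives avoids the bad set. Because every element of $G(X,\mathcal J)$ preserves $\mathcal J$ and the union is countable, $\bigcup_\gamma\hat\gamma^{-1}(N)\in\mathcal J$, so $Z$ is Borel and $Z\in\co(\mathcal J)$. Defining $\tilde\rho(\gamma,x)=\hat\gamma(x)$, the homomorphism identity $\hat\gamma_1(\hat\gamma_2(x))=\widehat{\gamma_1\gamma_2}(x)$ holds for every $x\in Z$ (as then $x\notin N_{\gamma_1,\gamma_2}$), and $\hat e=\id$ on $Z$; together with the normalizations this yields $\tilde\rho(T_0,\cdot)=\theta_0\restrict Z$ and $\tilde\rho\restrict H\times Z=\rho\restrict H\times Z$.

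I expect the main obstacle to be exactly the invariance of $Z$ under the $\hat\gamma$, since this is what makes $\tilde\rho$ an action \emph{into} $Z$, and it is the reason for taking the orbit-saturated $Z$ above in place of $X\setminus N$. To verify it, given $x\in Z$ and $\gamma,\delta\in\Gamma$, I would use that $x\notin N_{\gamma,\delta}$ (since $x\in Z\subseteq X\setminus N$) to get $\hat\gamma(\hat\delta(x))=\widehat{\gamma\delta}(x)$, and that $\widehat{\gamma\delta}(x)\notin N$ (again since $x\in Z$); hence $\hat\gamma(\hat\delta(x))\notin N$ for every $\gamma$, which is precisely $\hat\delta(x)\in Z$. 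Thus each $\hat\gamma$ maps $Z$ bijectively onto $Z$ (using $\hat\gamma\,\hat{\gamma^{-1}}=\hat e=\id$ on $Z$), in particular $Z$ is invariant under $\rho$ and $\theta_0$, and $\tilde\rho$ is a genuine Borel action of $\Gamma$ on $Z$ with the stated properties. This self-bootstrapping — the cocycle relations one needs on $Z$ are exactly what forces $Z$ to be invariant — is the one point that requires care.
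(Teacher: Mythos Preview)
Your proof is correct and follows essentially the same strategy as the paper: pass to a Borel co-$\mathcal J$ set on which all the group relations hold pointwise and verify invariance. The only cosmetic difference is that the paper phrases everything in terms of words in the generators $H\cup\{T_0^{\pm 1}\}$ (defining $Z$ by the condition that every word equal to the identity in $\langle H\cup\{T_0\}\rangle$ acts as the identity at $x$), whereas you fix a representative $\hat\gamma$ for each $\gamma\in\Gamma$ and saturate the cocycle-failure set under the $\hat\gamma$'s; your invariance argument is in fact more explicit than the paper's ``clearly''.
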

\begin{proof}
Let $Z$ be the set of all $x\in X$ such that for all $n_1,\ldots,
n_k\in \{-1,1\}$, $n_0,n_{k+1}\in \{-1,0,1\}$ and
$\gamma_1,\ldots,\gamma_k\in H$ it holds that if
$$
T_0^{n_0}\gamma_1 T_0^{n_1}\cdots\gamma_k T_0^{n_{k+1}}=\id
$$
then
$$
\theta_0^{n_0}\rho(\gamma_1,\cdot)\theta_0^{n_1}\cdots\rho(\gamma_k,\cdot)
\theta_0^{n_{k+1}}(x)=x.
$$
Clearly $Z$ is Borel, $Z\in\co(\mathcal J)$ and $Z$ is invariant
under $\rho$ and $\theta_0$. For $z\in Z$ and $T\in\langle
H\cup\{T_0\}\rangle$ write $T=T_0^{n_0}\gamma_1
T_0^{n_1}\cdots\gamma_k T_0^{n_{k+1}}$, with
$\gamma_1,\ldots,\gamma_k\in H$, and define
$$
\tilde\rho(T,z)=\theta_0^{n_0}\rho(\gamma_1,\cdot)\theta_0^{n_1}\cdots\rho(\gamma_k,\cdot)
\theta_0^{n_{k+1}}(z).
$$
If $T=T_0^{n_0'}\gamma_1' T_0^{n_1'}\cdots\gamma_l' T_0^{n_{l+1}'}$
is another representation of $T$ of this form then by the definition
of $Z$ we must have
$$
\tilde\rho(T,z)=\theta_0^{n_0'}\rho(\gamma_1',\cdot)\theta_0^{n_1'}\cdots\rho(\gamma_l',\cdot)
\theta_0^{n_{l+1}'}(z).
$$
Thus $\tilde\rho$ is well-defined on $Z$. Clearly $\tilde\rho$ is a
$\langle H\cup\{T_0\}\rangle$-action by Borel bijections such that
$\tilde\rho(T_0,\cdot)=\theta_0\restrict Z$ and $\tilde
\rho\restrict H\times Z = \rho\restrict H\times Z$.
\end{proof}

If the conclusion of Lemma \ref{extend} was always true for $Z=X$
and for \emph{some} $\theta_0$ with $\varphi_{\mathcal
J}(\theta_0)=T_0$, then Theorem \ref{mainthm3} would follow from an
easy transfinite induction. However, we certainly cannot expect to
have Lemma \ref{extend} with $Z=X$ for just any $\theta_0$ with
$\varphi_{\mathcal J}(\theta_0)=T_0$ if we proceed na\"ively,
loosely speaking since the action of $H$ might behave very
differently on some $\mathcal J$ set than on a given $\co(\mathcal
J)$ set.

We will prove a Lemma that shows that under certain conditions,
there is always {\it some} $\theta_0$ with $\varphi_{\mathcal
J}(\theta_0)=T_0$ that satisfies Lemma \ref{extend} with $Z=X$.
First we need to recall the notion of a (Bernoulli) shift action,
and a Lemma regarding their universality among Borel actions of
countable groups. If $H\leq G$ are countable groups and $X$ is a
standard Borel space, recall that a Borel action $\beta:H\actson
X^G$ is defined by
$$
(\beta(h)(x))(g)=x(h^{-1}g)
$$
for $h\in H$ and $g\in G$.

\begin{lemma}[Folklore]
Let $H\leq G$ be countable groups and let $X=2^\N=\{0,1\}^\N$ be
Cantor space. Then the shift action $\beta: H\actson X^G$ has the
following universality property: If $Y$ is a standard Borel space
and $\sigma:H\actson Y$ is a Borel action of $H$ on $Y$, then there
is a Borel injection $\psi:Y\to X^G$ such that for all $h\in H$,
$$
\psi(\sigma(h)(x))=\beta(h)(\psi(x)),
$$
i.e., $\psi$ conjugates the actions $\sigma$ and
$\beta|\psi(Y)$.\label{universal}
\end{lemma}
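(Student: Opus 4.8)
The plan is to build $\psi$ by a coinduction-type construction: first encode each point of $Y$ as a single element of Cantor space, then spread this encoding along the $H$-orbits of the coordinate set $G$ using the given action $\sigma$. Since $Y$ is standard Borel, I would begin by fixing a Borel injection $\iota:Y\to 2^\N$, which exists because every standard Borel space admits a Borel injection into $2^\N$. The guiding idea is that the coordinate of $\psi(y)$ indexed by $g\in H$ should record the encoding of the translated point $\sigma(g^{-1})(y)$, while the coordinates indexed by $g\in G\setminus H$, which lie on $H$-orbits disjoint from $H$, may be set to a fixed constant.

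Concretely, I would define $\psi:Y\to X^G$ by
\[
\psi(y)(g)=\begin{cases} \iota(\sigma(g^{-1})(y)) & \text{if } g\in H,\\ \mathbf 0 & \text{if } g\in G\setminus H,\end{cases}
\]
where $\mathbf 0\in 2^\N$ is the constant zero sequence. Three things then need to be checked: equivariance, injectivity, and Borel measurability.

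For equivariance I would verify $\psi(\sigma(h)(y))(g)=\psi(y)(h^{-1}g)$ for every $h\in H$ and $g\in G$. The key algebraic point is that $H$ is a subgroup, so $h^{-1}g\in H$ precisely when $g\in H$; hence the shift respects the two cases of the definition. When $g\in H$, both sides reduce to $\iota(\sigma(g^{-1}h)(y))$ after applying that $\sigma$ is an action, and when $g\notin H$ both sides equal $\mathbf 0$. Injectivity follows by evaluating at the single coordinate $g=e$: if $\psi(y)=\psi(y')$ then $\iota(y)=\iota(y')$, so $y=y'$ by injectivity of $\iota$. Finally, $\psi$ is Borel because $X^G$ carries the product Borel structure and $G$ is countable, so it suffices that each coordinate map $y\mapsto\psi(y)(g)$ be Borel; but this map is either constant or the composition of $\iota$ with the Borel map $y\mapsto\sigma(g^{-1})(y)$, the latter being Borel since $\sigma$ is a Borel action of $H$.

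There is no serious obstacle here; the only genuine idea is the coinduction trick of reading $\iota(\sigma(g^{-1})(y))$ off at coordinate $g$, which simultaneously forces equivariance and preserves injectivity. The only points demanding a little care are the bookkeeping that the subgroup condition $h^{-1}g\in H\iff g\in H$ makes the case split shift-invariant, and the observation that assigning a single $H$-fixed constant to all coordinates over $G\setminus H$ interferes with neither equivariance (a constant transforms correctly under the shift) nor injectivity (which is already secured at the coordinate $g=e$).
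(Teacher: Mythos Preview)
Your proof is correct and follows the same coinduction idea as the paper: encode $\sigma(g^{-1})(y)$ at coordinate $g$ via a Borel injection $\iota:Y\to 2^\N$ (the paper phrases this equivalently via a separating family $(B_n)$, setting $\psi(x)(h)(n)=1$ iff $\sigma(h^{-1})(x)\in B_n$). The only cosmetic difference is the handling of the coordinates over $G\setminus H$: the paper first reduces to $H=G$ by noting that $H\curvearrowright (2^\N)^G$ is Borel conjugate to $H\curvearrowright (2^\N)^H$, whereas you set those coordinates to the constant $\mathbf 0$ --- your route is slightly more direct and avoids the need to justify that conjugacy.
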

\begin{proof}
It is easy to see that $\beta:H\actson X^G=(2^{\N})^G$ is conjugate
(via a Borel map) to $\beta':H\actson X^H$, so we may assume that
$H=G$. Let $(B_n)_{n\in\N}$ be a sequence of Borel sets that
separate the points if $Y$. Define $\psi:Y\to X^H$ by
$$
\psi(x)(h)(n)=\left\{ \begin{array}{ll}
1 & \textrm{if $\sigma(h^{-1})(x)\in B_n$}\\
0 & \textrm{otherwise.}\\
\end{array} \right.
$$
Then
\begin{align*}
&\psi(\sigma(g)(x))(h)(n)=1 \iff \sigma(h^{-1}g)(x)\in B_n \iff \\
&\psi(x)(g^{-1}h)(n)=1 \iff \beta(g)(\psi(x))(h)(n)=1,
\end{align*}
which shows that $\psi$ conjugates the $\sigma$ and $\beta$ actions.
Finally, since $(B_n)$ separates points $\psi$ is 1-1.
\end{proof}

\medskip

We now have the following strong version of Lemma \ref{extend},
which will play a key role in our construction:

\begin{lemma}[``Rearrangement'']
Let $H\leq G$ be countable groups, and suppose there are countable
groups $G_i$, $i\in\N$, such that $G\leq G_i$ for all $i$. Let $X$
be a standard Borel space which is partitioned into Borel pieces,
$$
X=X_0\sqcup\bigsqcup_{i\in\N} (2^\N)^{G_i},
$$
that is, $X$ is the disjoint union of $X_0$ and $(2^\N)^{G_i}$,
$i\in\N$, $X_0$ is Borel, and  $(2^\N)^{G_i}$ carries its usual
Borel structure for all $i\in\N$. Suppose $\rho:H\actson X$ is a
Borel action of $H$ such that
$$
\rho\restrict H\times (2^\N)^{G_i}
$$
is the shift action. Then there is a Borel action $\hat\rho:G\actson
X$ such that $\hat\rho\restrict H\times X=\rho$.\label{onestep}
\end{lemma}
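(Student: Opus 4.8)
The plan is to reduce the problem to finding a single Borel $H$-equivariant bijection of $X$ onto its ``shift part'' $Y^\ast=\bigsqcup_{i\in\N}(2^\N)^{G_i}$, and then to transport a $G$-action across it. Observe first that since $G\leq G_i$, each piece $(2^\N)^{G_i}$ carries the shift action of $G_i$, and hence of $G$; these assemble into a Borel $G$-action $\hat\beta:G\actson Y^\ast$ whose restriction to $H$ is exactly the $H$-shift, so that $\hat\beta\restrict H\times Y^\ast=\rho\restrict H\times Y^\ast$. Thus if we can produce a Borel bijection $F:X\to Y^\ast$ with $F(\rho(h,x))=\hat\beta(h,F(x))$ for all $h\in H$, then setting $\hat\rho(g,x)=F^{-1}(\hat\beta(g,F(x)))$ defines a Borel $G$-action on $X$, and the $H$-equivariance of $F$ forces $\hat\rho\restrict H\times X=\rho$, as required.

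Writing $X=X_0\sqcup Y^\ast$, note that each $(2^\N)^{G_i}$ is $\rho$-invariant by hypothesis, so $X_0$ is $H$-invariant and $\rho\restrict H\times X_0$ is a genuine Borel $H$-action. To construct $F$ I would invoke the equivariant Borel Schr\"oder--Bernstein theorem, so it suffices to exhibit Borel $H$-equivariant injections in both directions. One direction is free: $Y^\ast\subseteq X$ is an $H$-invariant Borel set, so the inclusion $Y^\ast\hookrightarrow X$ serves. For the reverse injection $X\hookrightarrow Y^\ast$ I would apply Lemma \ref{universal} piece by piece, using a Hilbert-hotel reindexing to keep the images disjoint: apply the lemma to the pair $H\leq G_0$ and the action $\rho\restrict H\times X_0$ to obtain an $H$-equivariant Borel injection $X_0\hookrightarrow(2^\N)^{G_0}$, and apply it to the pair $H\leq G_{i+1}$ and the $H$-shift on $(2^\N)^{G_i}$ to obtain $H$-equivariant Borel injections $(2^\N)^{G_i}\hookrightarrow(2^\N)^{G_{i+1}}$. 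Since these land in pairwise distinct target pieces, their union is a single Borel $H$-equivariant injection $X\hookrightarrow Y^\ast$.

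With both injections in hand---call the reverse one $\iota:X\to Y^\ast$ and the inclusion $j:Y^\ast\hookrightarrow X$---the equivariant Borel Schr\"oder--Bernstein theorem delivers the desired bijection $F:X\to Y^\ast$, completing the argument. The one point requiring care, and the main obstacle, is justifying the equivariant strengthening of Schr\"oder--Bernstein: one runs the classical back-and-forth construction on $\iota$ and $j$, and must check that the resulting partition of $X$ into the three regions---points whose back-chains terminate in $X\setminus j(Y^\ast)=X_0$, those terminating in $Y^\ast\setminus\iota(X)$, and those with infinite regress---consists of $H$-invariant Borel sets. This is immediate because both $\iota$ and $j$ commute with the $H$-action, so each region is cut out by an $H$-invariant Borel condition; the resulting bijection, equal to $\iota$ on two of the regions and to $j^{-1}$ on the third, is then automatically $H$-equivariant.
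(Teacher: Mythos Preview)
Your proof is correct and follows the same overall strategy as the paper: reduce to constructing an $H$-equivariant Borel bijection $F:X\to Y^\ast=\bigsqcup_i(2^\N)^{G_i}$, then transport the natural $G$-shift action on $Y^\ast$ back through $F$. The paper likewise uses Lemma~\ref{universal} to embed $X_0$ into the shift pieces and a Hilbert-hotel reindexing to make room. The only real difference is in how the bijection $F$ is obtained: you produce two equivariant Borel injections and invoke an equivariant Borel Schr\"oder--Bernstein argument, whereas the paper writes $F$ down explicitly, repartitioning $X$ into pieces $Z_0=X_0\sqcup(X_1\setminus\psi_1(X_0))$ and $Z_i=(X_{i+1}\setminus\psi_{i+1}(X_0))\sqcup\psi_i(X_0)$ for $i\geq 1$, and defining bijections $\bar\psi_i:Z_i\to X_{i+1}$ directly. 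Your route is a bit more conceptual and spares the bookkeeping of the explicit partition; the paper's route avoids the appeal to Schr\"oder--Bernstein and keeps the argument self-contained.
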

\begin{proof}
For convenience, let $X_i=(2^\N)^{G_i}$, so that $X=\bigsqcup_{i\geq
0} X_i$. Note that by definition, $X_0$ is $\rho$-invariant. Thus by
Lemma \ref{universal} we may find Borel injections $\psi_i:X_0\to
(2^\N)^{G_i}$, $i\geq 1$, such that $\psi_i$ conjugates
$\rho\restrict X_0$ and $\rho \restrict \psi_i(X_0)$. We let
$\psi_0:X_0\to X_0$ be $\psi_0(x)=x$. Now let
$$
Z_0=X_0\sqcup (X_1\setminus \psi_1(X_0)),
$$
and for $i\geq 1$ let
$$
Z_i=(X_{i+1}\setminus (\psi_{i+1}(X_0)))\sqcup \psi_{i}(X_0).
$$
Then each $Z_i$ is $\rho$-invariant, and the functions
$$
\bar\psi_i:Z_i\to X_{i+1}: x\mapsto\left\{ \begin{array}{ll}
\psi_{i+1}\psi_{i}^{-1}(x) & \textrm{if $x\in \psi_i(X_0)$}\\
x & \textrm{otherwise.}\\
\end{array} \right.
$$
defines a bijection between $Z_i$ and $X_{i+1}$, $i\geq 0$.
Moreover, $\bar\psi_i$ conjugates the actions $\rho\restrict Z_i$
and $\rho\restrict X_{i+1}$. Now for $i\geq 1$ let $\beta_i:G\times
X_i\to X_i$ be the Bernoulli shift action of $G$. For each $i\geq 0$
we may define a $G$-action $\hat\rho_i:G\times Z_i\to Z_i$ by
$$
\hat\rho_i(\gamma,x)=\bar\psi_i^{-1}(\beta_i(\gamma,\bar\psi_i(x))).
$$
Since $\beta_i\restrict H\times X_i=\rho\restrict H\times X_i$ and
$\bar\psi_i$ conjugates the actions $\rho\restrict Z_i$ and
$\rho\restrict X_{i+1}$, we have that $\hat\rho_i\restrict H\times
Z_i=\rho\restrict H\times Z_i$. Thus if we let
$$
\hat\rho=\bigcup_{i\in\N} \hat\rho_i,
$$
then $\hat\rho$ is a Borel $G$-action as required.
\end{proof}

\subsection{The master actions}

Fix a group $H\leq\Aut(X,\mathcal J)$ of cardinality $\aleph_1$. Let
$(T_\alpha:\alpha<\omega_1)$ be an enumeration of $H$. For
$\alpha<\omega_1$ define
$$
H_\alpha=\langle T_\beta:\beta<\alpha\rangle,
$$
i.e., $H_\alpha$ is the subgroup generated by all $T_\beta$,
$\beta<\alpha$. By definition we let $H_0=\{I\}$, the subgroup
containing only the identity in $\Aut(X,\mathcal J)$.

We now describe a natural Polish space $S_\alpha$ of countable
groups containing a copy of $H_{\alpha}$, and with a sequence of
designated elements that will correspond to the $T_\beta$,
$\beta<\alpha$.

If $x\in 2^{\N\times\N}$, write $R_x$ for the relation
$$
m R_x n\iff x(m,n)=1.
$$
We let
$$
\lo=\{x\in 2^{\N\times\N}: R_x \text{ is a (strict) linear ordering}\}.
$$
Elements of $\lo$ will usually be denoted by $<^*$ or something
similar, and we will write $m<^*n$ to indicate what should more correctly be
written $m R_{<^*} n$. Moreover, we will write $m\leq^* n$ if
$m<^*n$ or $m=n$. Let $\gp$ denote the set of all groups with
underlying set $\N$, i.e.
\begin{align*}
\gp=\{&(f,g,e)\in \N^{\N\times\N}\times\N^\N\times\N:\\
&(\forall i,j,k\in N)
(f(f(i,j),k)=f(i,f(j,k)))\wedge\\
&(\forall i\in \N) f(i,e)=f(e,i)=i \wedge (\forall i)
f(i,g(i))=e\}.
\end{align*}
Again, it is notationally convenient to denote an element in $\gp$
by $G$, and refer to the multiplication in $G$ by $\cdot_G$, the
inverse by $^{-1}$, and the identity element by $e_G$.

For $\alpha<\omega_1$ let
$$
A_{\alpha}=\{(<^*,n)\in\lo\times\N: (\{k\in\N: k<^*n\},<^*)\simeq\alpha\},
$$
that is, $(<^*,n)\in A_{\alpha}$ if and only if the initial segment
$\{k:k<^*n\}$ is order isomorphic to the ordinal $\alpha$.

\begin{lemma}
The set $A_\alpha$ is Borel for all
$\alpha<\omega_1$.\label{Borel-lo}
\end{lemma}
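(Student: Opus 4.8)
The plan is to proceed by transfinite induction on $\alpha<\omega_1$, at each stage expressing membership in $A_\alpha$ by a Borel condition that refers only to the sets $A_\beta$ with $\beta<\alpha$, which are Borel by the induction hypothesis. The recurring principle is that $\alpha$ is \emph{countable}, so any quantifier ranging over the ordinals $\beta<\alpha$ turns into a countable union or intersection and hence preserves Borelness.

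For the base case $\alpha=0$, note that $(<^*,n)\in A_0$ exactly when the initial segment $\{k:k<^*n\}$ is empty, i.e. $n$ is the $<^*$-least element: $(\forall k)(k=n\vee n<^*k)$. This is a closed, hence Borel, condition on $(<^*,n)\in\lo\times\N$. For the successor case $\alpha=\beta+1$, I would observe that $\{k:k<^*n\}$ has order type $\beta+1$ precisely when $n$ has an immediate $<^*$-predecessor $m$ (that is, $m<^*n$ and no $k$ satisfies $m<^*k<^*n$) for which the segment below $m$ has type $\beta$, i.e. $(<^*,m)\in A_\beta$. The predicate ``$m$ is the immediate predecessor of $n$'' is arithmetic in $<^*$, and the existential quantifier over $m\in\N$ is a countable union, so $A_{\beta+1}$ is Borel.

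The limit case is the main obstacle, since now $\{k:k<^*n\}$ has no largest element and there is no top point to peel off. Here I would characterize $A_\alpha$ by two conditions on $(<^*,n)$: (1) for every $m<^*n$ one has $(<^*,m)\in\bigcup_{\beta<\alpha}A_\beta$, equivalently every proper initial segment of $\{k:k<^*n\}$ has order type $<\alpha$; and (2) for every $\beta<\alpha$ there is some $m<^*n$ with $(<^*,m)\in A_\beta$. A short argument shows these are together equivalent to $(<^*,n)\in A_\alpha$. Indeed, condition (1) forces $\{k:k<^*n\}$ to be well-ordered: an infinite descending sequence $\cdots<^*k_1<^*k_0<^*n$ would, taking $m=k_0$, produce an infinite descending sequence inside $\{k:k<^*m\}$, contradicting that this set is order-isomorphic to an ordinal. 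So $\{k:k<^*n\}$ has some order type $\gamma$, and condition (1) gives $\gamma\leq\alpha$ while condition (2) gives $\gamma\geq\alpha$, whence $\gamma=\alpha$.

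It remains to check that conditions (1) and (2) are Borel. The set $\bigcup_{\beta<\alpha}A_\beta$ is a countable union of Borel sets, so condition (1), a universal quantifier over $m\in\N$ applied to a Borel predicate, is Borel. For each $\beta<\alpha$ the set $\{(<^*,n):(\exists m)(m<^*n\wedge (<^*,m)\in A_\beta)\}$ is Borel, and condition (2) is the intersection of these over the countably many $\beta<\alpha$, hence Borel. Therefore $A_\alpha$ is the intersection of two Borel sets and is Borel, closing the induction. The only delicate point I anticipate is the verification in the limit case that conditions (1) and (2) genuinely pin down order type \emph{exactly} $\alpha$ (rather than merely $\leq\alpha$ or ``well-founded''), which is exactly why the well-foundedness observation and the matching lower bound from (2) are both needed.
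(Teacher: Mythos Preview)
Your proof is correct and follows essentially the same transfinite-induction strategy as the paper, with separate successor and limit cases expressed by Borel formulas built from the $A_\beta$, $\beta<\alpha$. The only cosmetic difference is in the limit step: the paper fixes a cofinal sequence $(\beta_i)_{i\in\N}$ in $\alpha$ and phrases the two conditions in terms of it, whereas you quantify over all $\beta<\alpha$ directly and are more explicit about verifying that conditions (1) and (2) together pin down order type exactly $\alpha$ (including the well-foundedness check).
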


\begin{proof}
By induction on $\alpha$. If $\alpha=\beta+1$ then
\begin{align*}
A_{\alpha}=\{(<^*,n)\in\lo\times\N: &(\exists m) ((<^*,m)\in A_\beta\wedge\\
 &(\forall k) k<^*n\implies k\leq^* m)\}.
\end{align*}
If $\alpha$ is a limit, fix $\beta_i<\alpha$, $i\in\N$ such that
$\sup_{i\in\N}\beta_i=\alpha$. Then
\begin{align*}
A_{\alpha}=\{&(<^*,n)\in\lo\times\N: (\forall i)(\exists m) (<^*,m)\in A_{\beta_i}\wedge\\
 &(\forall k) k<^*n\implies (\exists i)(\exists l) (<^*,l)\in A_{\beta_i}\wedge k<^* l\}.
\end{align*}
\end{proof}

{\it Definition.} For $\alpha<\omega_1$ we define $\mathcal
S_{\alpha}\subseteq \lo\times\gp$ to consists of all $(<^*,G)\in
\lo\times\gp$ such that
\begin{enumerate}[($i$)]
 \item For some $n\in\N$, $(<^*,n)\in A_\alpha$;
 \item There is a monomorphism $\varphi: H_{\alpha}\to G$ such that $(<^*,\varphi(T_\beta))\in A_\beta$ for all $\beta<
\alpha$.
\end{enumerate}

\begin{lemma}
 The set $\mathcal  S_\alpha$ is Borel for all $\alpha<\omega_1$.
\end{lemma}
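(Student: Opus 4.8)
The plan is to show that each of the two defining clauses (i) and (ii) cuts out a Borel subset of $\lo\times\gp$, so that $\mathcal S_\alpha$, being their intersection, is Borel. Throughout I use that $A_\beta$ is Borel for every $\beta\leq\alpha$ by Lemma \ref{Borel-lo}. Clause (i) is the easy one: the set $P_\alpha=\{<^*\in\lo:(\exists n)\,(<^*,n)\in A_\alpha\}$ is a union over the countable index set $\N$ of the sections $\{<^*:(<^*,n)\in A_\alpha\}$ of the Borel set $A_\alpha$, hence Borel, and clause (i) simply asks $(<^*,G)\in P_\alpha\times\gp$. It thus remains to treat (ii).

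The key observation is that the existential quantifier in (ii) --- ``there is a monomorphism $\varphi$'' --- is only apparent, since $\varphi$ is forced on generators and so (ii) is not a genuine projection. Indeed, for a fixed $<^*\in\lo$ and a fixed ordinal $\beta$ there is \emph{at most one} $n\in\N$ with $(<^*,n)\in A_\beta$: if $n_1\neq n_2$ both worked, say $n_1<^* n_2$, then $\{k:k<^* n_1\}$ would be a proper initial segment of the well-order $\{k:k<^* n_2\}$ of the same order type $\beta$, which is impossible. Hence any $\varphi$ as in (ii) must satisfy $\varphi(T_\beta)=n_\beta(<^*)$, where $n_\beta$ is the partial function with graph $A_\beta\subseteq\lo\times\N$; being single-valued with Borel graph (so that preimages of points are countable unions of Borel sections), $n_\beta$ is Borel, and on $P_\alpha$ it is total for every $\beta<\alpha$, since the initial segment of order type $\alpha$ realizes every smaller order type. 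So on $P_\alpha\times\gp$ clause (ii) is equivalent to the single assertion that the \emph{canonical} assignment $T_\beta\mapsto n_\beta(<^*)$ extends to a monomorphism of $H_\alpha$ into $G$.

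I then unwind ``extends to a monomorphism'' into countably many Borel conditions. Since $\alpha<\omega_1$ the generating set $\{T_\beta:\beta<\alpha\}$ is countable, so there are only countably many words $w=T_{\beta_1}^{\varepsilon_1}\cdots T_{\beta_k}^{\varepsilon_k}$ with $\beta_j<\alpha$ and $\varepsilon_j\in\{-1,1\}$. For such a $w$ write $\hat w(<^*,G)$ for the corresponding product $n_{\beta_1}(<^*)^{\varepsilon_1}\cdots n_{\beta_k}(<^*)^{\varepsilon_k}$ formed in $G$. Standard presentation theory shows that $T_\beta\mapsto n_\beta(<^*)$ induces a well-defined \emph{monomorphism} $H_\alpha\to G$ if and only if, for every such word $w$,
\[
\hat w(<^*,G)=e_G\iff T_{\beta_1}^{\varepsilon_1}\cdots T_{\beta_k}^{\varepsilon_k}=I\ \text{in}\ H_\alpha.
\]
For each fixed $w$ the right-hand side is a fixed truth value, a parameter not depending on $(<^*,G)$, while the map $(<^*,G)\mapsto\hat w(<^*,G)$ is Borel on $P_\alpha\times\gp$: it is a finite composition of the Borel functions $n_{\beta_j}$ with the evaluation maps $(G,p,q)\mapsto p\cdot_G q$ and $(G,p)\mapsto p^{-1}$, which are continuous on $\gp$ as they merely read off coordinates of $G$. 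Since $G\mapsto e_G$ is also continuous, each of the sets $\{\hat w=e_G\}$ and its complement is Borel. The displayed biconditional therefore selects, for each $w$, a Borel subset of $P_\alpha\times\gp$, and intersecting these countably many Borel sets with $P_\alpha\times\gp$ exhibits $\mathcal S_\alpha$ as Borel.

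The only real subtlety is the forcing of $\varphi$ in the second step: it is what collapses the existential clause (ii) to a countable conjunction of Borel conditions and keeps us inside the Borel hierarchy. Everything after that is routine bookkeeping --- the countability of the word set and the continuity of evaluation in $\gp$ --- together with the harmless restriction to $P_\alpha$ that is needed only to make the decoding functions $n_\beta$ total.
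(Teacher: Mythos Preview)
Your proof is correct and follows essentially the same approach as the paper's: both arguments observe that the existential quantifier over $\varphi$ in clause (ii) is spurious because $\varphi$ is forced on generators (via the uniqueness of the $n$ with $(<^*,n)\in A_\beta$), and then rewrite the monomorphism condition as a countable conjunction of Borel word-conditions of the form ``$\hat w(<^*,G)=e_G\iff w=I$ in $H_\alpha$''. The only cosmetic difference is that you package the decoding as explicit Borel partial functions $n_\beta$, whereas the paper quantifies over $m_0,\ldots,m_l\in\N$ subject to $(<^*,m_i)\in A_{\beta_i}$; these formulations are equivalent.
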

\begin{proof}
 We claim that $(<^*,G)$ satisfies (i) and (ii) is equivalent to the statement: There is $n\in \N$ such that
\begin{enumerate}[($i'$)]
 \item $(<^*,n)\in A_\alpha$,
 \item For all $\beta_0,\ldots\beta_l<\alpha$, $n_0,\ldots, n_l\in \{-1,1\}$ and $m_0,\ldots m_l\in\N$ it holds that
\begin{align*}
\text{(a)    } T_{\beta_0}^{n_0}T_{\beta_1}^{n_1}\cdots T_{\beta_l}^{n_l}=I\wedge (\forall i\leq l) (<^*,m_i)\in
A_{\beta_i}\implies\\
m_0^{n_0}\cdot_G m_1^{n_1}\cdots m_l^{n_l}=e_G
\end{align*}
and
\begin{align*}
\text{(b)    } T_{\beta_0}^{n_0}T_{\beta_1}^{n_1}\cdots T_{\beta_l}^{n_l}\neq I\wedge (\forall i\leq l) (<^*,m_i)\in
A_{\beta_i}\implies\\
m_0^{n_0}\cdot_G m_1^{n_1}\cdots m_l^{n_l}\neq e_G.
\end{align*}
\end{enumerate}
It is clear that ($i$) and ($ii$) imply ($i'$) and ($ii'$), and that ($i'$) and ($ii'$) are Borel conditions. To show
($i'$) and ($ii'$) imply ($i$) and ($ii$), it suffices to show that if $(<^*,G)$ satisfies ($i'$) and ($ii'$), then the
map $\varphi: H_\alpha\to G$ given by
\begin{align*}
&\varphi(x)=m_0^{n_0}\cdot_G m_1^{n_1}\cdots m_l^{n_l}\iff\\ &x=T_{\beta_0}^{n_0}T_{\beta_1}^{n_1}\cdots
T_{\beta_l}^{n_l}\wedge (\forall i\leq l) (<^*,m_0)\in A_{\beta_i}.
\end{align*}
defines a monomorphism with $\varphi(T_\beta)=m$ if and only if
$(<*,m)\in A_\beta$. That $\varphi$ is a well-defined function
follows easily from (a). If $\varphi(x)=e_G$ and
$$
x=T_{\beta_0}^{n_0}T_{\beta_1}^{n_1}\cdots T_{\beta_l}^{n_l},
$$
then $x=I$ follows from (b). Thus is $\varphi$ is a monomorphism as required.
\end{proof}

\medskip

{\it Remark.} For $(<^*,G)\in \mathcal S_\alpha$, the unique
monomorphism $\varphi: H_\alpha\to G$ satisfying ({\it ii}) in the
definition of $\mathcal S_\alpha$ will be called the {\it canonical}
monomorphism $H_\alpha\to G$.

\bigskip

{\it Definition.} For $\alpha<\omega_1$, the $\alpha$'th {\it master
action} $\sigma_\alpha: H_\alpha\actson \mathcal M_\alpha$ is
defined by
\begin{enumerate}
 \item $\mathcal M_\alpha=\mathcal S_\alpha\times (2^{\N})^{\N}$;
 \item For $\beta<\omega_1$,
\begin{align*}
& \sigma_\alpha(T_\beta)(<^*_0,G_0,x)=(<^*_1,G_1,y)\iff\\
& <^*_0=<^*_1\wedge G_0=G_1\wedge\\
&  (\forall m) ((<^*_0,m)\in A_\beta\implies (\forall n) y(n)=x(m^{-1}\cdot_{G_0} n)).
\end{align*}
\end{enumerate}
Note that $\sigma_\alpha\restrict \{(<^*,G)\}\times (2^\N)^\N$ is
isomorphic to the shift-action of $H_\alpha$ on $(2^\N)^\N$, when
$H_\alpha$ is identified canonically with its image under the
canonical monomorphism $\varphi:H_\alpha\to G$, and $G$ is
identified with its underlying set $\N$. So we think of $(2^\N)^\N$
as $(2^\N)^G$ and think of $\sigma_\alpha|\{(<^*,G)\}\times
(2^\N)^\N$ as the shift action $H_\alpha\actson\ (2^\N)^G$ for each
$(<^*,G)\in\mathcal S_\alpha$. It is clear that condition (2) of the
definition of $\sigma_\alpha$ defines the action uniquely since the
$T_\beta$, $\beta<\alpha$ generate $H_\alpha$. Moreover,
$\sigma_{\alpha}(T_\beta)$ is a Borel automorphism of $\mathcal
M_\alpha$ for each $\beta<\alpha$, and so $\sigma_\alpha$ defines a
Borel action of $H_\alpha$ on $\mathcal M_\alpha$. Finally note that
if $\beta<\alpha$ then $\mathcal M_\alpha\subseteq\mathcal M_\beta$
and that for the actions $\sigma_\alpha,\sigma_\beta$ we have
$$
\sigma_\alpha\restrict H_\beta\times \mathcal M_\alpha=\sigma_\beta\restrict H_\beta\times\mathcal M_\alpha,
$$
where again $H_\beta$ is canonically identified with a subgroup of
$G$ for each $(<^*,G)\in\mathcal S_\alpha\subseteq\mathcal S_\beta$.
The following is clear from the definitions:

\begin{lemma}
If $\alpha<\omega_1$ is a limit ordinal, it holds for the master
action $\sigma_\alpha$ that
$$
\sigma_\alpha=\bigcup_{\beta<\alpha} \sigma_{\beta}\restrict
\mathcal M_\alpha.
$$
\label{masterlimit}
\end{lemma}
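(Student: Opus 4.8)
The plan is to read the asserted equality as an equality of partial functions (graphs) on $H_\alpha\times\mathcal M_\alpha$ and to derive it from two ingredients: that for a \emph{limit} ordinal $\alpha$ the group $H_\alpha$ is the increasing union of the groups $H_\beta$, $\beta<\alpha$, and that the master actions already satisfy the coherence relation recorded just before the lemma. The limit hypothesis enters at exactly one point. First I would verify that $H_\alpha=\bigcup_{\beta<\alpha}H_\beta$ when $\alpha$ is a limit: any element of $H_\alpha=\langle T_\gamma:\gamma<\alpha\rangle$ is a finite product of generators $T_{\gamma_1},\ldots,T_{\gamma_k}$ with each $\gamma_i<\alpha$, and putting $\delta=\max_i\gamma_i+1$ we have $\delta<\alpha$ because $\alpha$ is a limit, so the product lies in $H_\delta$. (For a successor $\alpha=\gamma+1$ this fails, since $T_\gamma\in H_\alpha\setminus H_\gamma$ in general, which is precisely why the statement is restricted to limit $\alpha$.)

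Next I would confirm that the right-hand side is a bona fide action on $\mathcal M_\alpha$ with the correct domain. Since $\mathcal S_\alpha\subseteq\mathcal S_\beta$ for $\beta<\alpha$, we have $\mathcal M_\alpha\subseteq\mathcal M_\beta$; and because each generator $\sigma_\beta(T_\gamma)$ fixes the $\lo\times\gp$-coordinate of a point, it (and hence all of $H_\beta$) preserves $\mathcal M_\alpha=\mathcal S_\alpha\times(2^\N)^\N$. Thus $\sigma_\beta\restrict\mathcal M_\alpha$ is a well-defined action of $H_\beta$ on $\mathcal M_\alpha$, and by the first step the union $\bigcup_{\beta<\alpha}\sigma_\beta\restrict\mathcal M_\alpha$ has domain $\bigl(\bigcup_{\beta<\alpha}H_\beta\bigr)\times\mathcal M_\alpha=H_\alpha\times\mathcal M_\alpha$, matching the domain of $\sigma_\alpha$.

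Finally I would match the values. Given $T\in H_\alpha$ and $z\in\mathcal M_\alpha$, choose $\beta<\alpha$ with $T\in H_\beta$ by the first step; the coherence relation $\sigma_\alpha\restrict H_\beta\times\mathcal M_\alpha=\sigma_\beta\restrict H_\beta\times\mathcal M_\alpha$ then yields $\sigma_\alpha(T)(z)=\sigma_\beta(T)(z)=(\sigma_\beta\restrict\mathcal M_\alpha)(T)(z)$, so the two sides agree at $(T,z)$; the same relation, applied to any pair $\beta_1\le\beta_2<\alpha$ and restricted to $\mathcal M_\alpha\subseteq\mathcal M_{\beta_2}$, shows the restrictions $\sigma_\beta\restrict\mathcal M_\alpha$ are pairwise consistent, so their union is a well-defined function. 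The only step requiring any care is the first, where limitness is used to identify $H_\alpha$ with $\bigcup_{\beta<\alpha}H_\beta$; once that bookkeeping is in place there is no substantial obstacle, the whole assertion reducing to the coherence relation, which is why the authors can call it clear from the definitions.
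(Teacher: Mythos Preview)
Your proposal is correct and is precisely the natural unpacking of what the paper leaves implicit: the paper offers no proof beyond declaring the lemma ``clear from the definitions.'' Your argument---using that $H_\alpha=\bigcup_{\beta<\alpha}H_\beta$ at limit $\alpha$ together with the already-recorded coherence relation $\sigma_\alpha\restrict H_\beta\times\mathcal M_\alpha=\sigma_\beta\restrict H_\beta\times\mathcal M_\alpha$---is exactly the verification the authors have in mind.
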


We are now ready to prove Theorem \ref{mainthm3}. The idea is to
construct the homomorphism $h:H\to G(X,\mathcal J)$ by transfinite
induction on $\omega_1$ by constructing in stages $h_\alpha:
H_\alpha\to G(X,\mathcal J)$, at each stage making sure that on some
$\mathcal J$ set we have a large invariant ``reservoir'' of master
actions. This will allow us to use Lemma \ref{onestep} to extend the
homomorphism at each stage of the induction.

\begin{proof}[Proof of Theorem \ref{mainthm3}.]
Since $\mathcal J$ contains an uncountable Borel set we can assume
that $X$ has the form $X=X_0\sqcup 2^\N\times \mathcal M_0$
(disjoint union) such that $X_0\in\co(\mathcal J)$ . We construct by
induction on $\alpha<\omega_1$ homomorphisms $h_{\alpha}:H_\alpha\to
G(X,\mathcal J)$ and uncountable Borel sets $Y_\alpha\subseteq 2^\N$
such that
\begin{enumerate}
 \item $h_0(I)=\id$, $Y_0=2^\N$;
 \item $h_\alpha: H_\alpha\to G(X,\mathcal J)$ is a homomorphism such that $\varphi_{\mathcal J}(h_\alpha(T))=T$ for all
$T\in H_\alpha$;
 \item If $\beta<\alpha$ then $Y_\beta\supseteq Y_\alpha$ and $Y_\beta\setminus Y_\alpha$ is countable;
 \item For $(y,x)\in Y_\alpha\times\mathcal M_\alpha$ we have $h_\alpha(T)(y,x)=(y,\sigma_\alpha(T)(x))$ for all $T\in H_
\alpha$;
 \item If $\beta<\alpha$ then $h_\beta=h_\alpha\restrict H_\beta$.
 \end{enumerate}
If this can be done then we let
$$
h=\bigcup_{\alpha<\omega_1} h_\alpha.
$$
By condition (5), $h$ is a function, $h:H\to G(X,\mathcal J)$. By
condition (2) it is a homomorphism with $\varphi_{\mathcal J}(h(T))=
T$ for all $T\in\Aut(X,\mathcal J)$. Thus to prove Theorem
\ref{mainthm3} it suffices to construct $h_\alpha$ and $Y_\alpha$
satisfying (1)--(5).

\medskip

Case 1: $\alpha$ is a limit ordinal.

\medskip

We let $h_\alpha=\bigcup_{\beta<\alpha} h_\beta$ and
$Y_\alpha=\bigcap_{\beta<\alpha} Y_\beta$. (1)--(3) and (5) are
clearly satisfied. (4) follows by Lemma \ref{masterlimit}.

\medskip

Case 2: $\alpha=\beta+1$.

\medskip
Let $\rho_\beta:H_\beta\times X\to X$ be the action corresponding to
$h_\beta$,
$$
\rho_\beta(T,x)=h_\beta(T)(x).
$$
By Lemma \ref{extend} we may find a Borel $Z\subseteq X_0$ and
$\theta:Z\to Z$ Borel such that $Z\in\co(\mathcal J)$ and
$\varphi_{\mathcal J}(\theta)= T_\beta$ (more precisely, an
extension $\bar\theta$ of $\theta$ satisfies $\varphi_{\mathcal
J}(\bar\theta)=T_\beta$), such that $Z$ is invariant under
$$
\{h_\beta(T):T\in H_\beta\}\cup \{ \theta \},
$$
and $\rho_\beta\restrict Z$ extends to an action
$\rho^0:H_\alpha\times Z\to Z$ such that
$$
\rho^0\restrict H_\beta\times Z=\rho_\beta\restrict H_\beta\times Z,
$$
and $\rho^0(T_\beta,\cdot)=\theta$.

Pick a countable sequence $(y_i\in Y_\beta:i\in\N)$ of distinct
elements in $Y_\beta$. Let
$Y_\alpha=Y_\beta\setminus\{y_i:y\in\N\}$, and let
$$
W=X\setminus(Z\cup Y_\alpha\times\mathcal M_\alpha).
$$
Also pick a sequence $(<^*_i,G_i)\in \mathcal S_\alpha, i\in\N,$ distinct. Then
$$
W=(W\setminus\bigcup_{i\in\N} \{(y_i,<^*_i, G_i)\}\times
(2^\N)^\N)\sqcup\bigsqcup_{i\in\N}  \{(y_i,<^*_i, G_i)\}\times
(2^\N)^\N.
$$
is a decomposition of $W$ into $\rho_\beta$-invariant pieces. Note
that $\rho_\beta\restrict \{(y_i,<^*_i, G_i)\}\times (2^\N)^\N$ is
isomorphic to a shift-action of $H_\beta< G_i$ on $(2^\N)^{G_i}$ and
by construction $H_\beta<H_\alpha\leq G_i$. Thus the hypothesis of
Lemma \ref{onestep} is satisfied for $\rho_\beta\restrict
H_\beta\times W$, and so we may find a Borel action $\rho^1:
H_\alpha\times W\to W$ such that $\rho^1\restrict H_\beta\times
W=\rho_\beta\restrict H_\beta\times W$.

Finally, we define $\rho^2:H_\alpha\times (Y_\alpha\times \mathcal
M_\alpha)\to Y_\alpha\times \mathcal M_\alpha$ by
$$
\rho^2(T,(y,x))=(y,\sigma_\alpha(T)(x)).
$$
Note that by (4), $\rho^2$ is an extension of $\rho_\beta\restrict
H_\beta\times (Y_\alpha\times \mathcal M_\alpha)$. It follows that
$$
\rho_\alpha=\rho^0\cup\rho^1\cup\rho^2
$$
defines a Borel action of $H_\alpha$ extending $\rho_\beta$. Let
$$
h_\alpha: H_\alpha\to G(X,\mathcal J): T\mapsto
\rho_\alpha(T,\cdot).
$$
Conditions (1), (3) and (5) are clearly satisfied. (2) follows since
$\rho^0(T_\beta,\cdot)=\theta$. (4) follows directly from the
definition of $\rho^2$. Thus the proof of Theorem \ref{mainthm3} is
complete.
\end{proof}

{\it Remark.} Let $\mu$ be a Borel measure on $X$ and let $\mathcal
J$ be the ideal in $\mathcal B(X)$ of measure zero sets. Then
$$
\Aut(X,\mathcal J)=G(X,\mathcal J)/I(X,\mathcal J)
$$
is the group of measure class preserving transformations on
$(X,\mu)$. If $\mu$ is a $\sigma$-finite measure then this is a
Polish group, usually denoted $\Aut^*(X,\mu)$, see
\cite[17.46]{kechris95}. The notions of near-actions and spatial
actions can be generalized in the obvious way to measure class
preserving actions, and Theorem \ref{mainthm3} applies to
$\Aut^*(X,\mu)$. Thus we have:

\begin{corollary}
Let $X$ be a standard Borel space and $\mu$ a Borel measure with at
least one uncountable null set (e.g. $X$ uncountable and $\mu$ a
$\sigma$-finite measure.) Let $G$ be a group of at most cardinality
$\aleph_1$. Then every measure class preserving near-action of $G$
on $(X,\mu)$ has a spatial model acting by Borel automorphisms.

In particular, if CH holds then all measure class preserving
near-actions have a spatial model acting by Borel automorphisms.
\end{corollary}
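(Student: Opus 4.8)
The plan is to obtain this Corollary as an immediate specialization of Theorem \ref{mainthm3}, taking $\mathcal J$ to be the $\sigma$-ideal $\mathcal I(X)$ of $\mu$-null Borel sets. First I would check that the hypotheses of Theorem \ref{mainthm3} are met: since $\mu$ is a measure, $\mathcal I(X)$ is indeed a $\sigma$-ideal in $\mathcal B(X)$, and by assumption it contains an uncountable Borel set. (For the parenthetical example, when $X$ is uncountable and $\mu$ is $\sigma$-finite one produces such a set by splitting off the atomic part: the purely atomic part is supported on countably many points, so its complement is an uncountable null set, while any non-atomic part is Borel-isomorphic mod null to a scalar multiple of Lebesgue measure, which carries uncountable null sets such as a Cantor set.)

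Next I would record the identifications that put Theorem \ref{mainthm3} into this language. With $\mathcal J=\mathcal I(X)$, the group $G(X,\mathcal J)$ consists exactly of the Borel bijections $f$ with $f^{-1}(B)\in\mathcal I(X)\iff B\in\mathcal I(X)$, i.e. of the measure-class preserving Borel automorphisms, and the quotient $\Aut(X,\mathcal J)=G(X,\mathcal J)/I(X,\mathcal J)$ is precisely the group $\Aut^*(X,\mu)$ of measure-class preserving transformations, with $\varphi_{\mathcal J}$ the canonical quotient map. A measure-class preserving near-action $\eta:G\times X\to X$ then determines, exactly as in the measure-preserving correspondence recalled in the introduction, a homomorphism $f:G\to\Aut^*(X,\mu)=\Aut(X,\mathcal J)$ by sending $g$ to the class $T_g$ of the a.e.-defined measure-class preserving bijection $x\mapsto\eta(g,x)$.

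Now let $\tilde G=\im(f)\leq\Aut(X,\mathcal J)$; this is a subgroup of cardinality at most $|G|\leq\aleph_1$. Applying Theorem \ref{mainthm3} to $\tilde G$ yields a homomorphism $\tilde h:\tilde G\to G(X,\mathcal J)$ with $\varphi_{\mathcal J}\circ\tilde h=\id_{\tilde G}$. Setting $h=\tilde h\circ f:G\to G(X,\mathcal J)$ gives a homomorphism with $\varphi_{\mathcal J}\circ h=f$, so that for each $g$ the Borel automorphism $h(g)$ represents the class $T_g$; equivalently $\rho(g,x):=h(g)(x)$ is a genuine Borel spatial action with $\rho(g,x)=\eta(g,x)$ for $\mu$-a.e. $x$, that is, a spatial model of $\eta$ acting by Borel automorphisms.

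Finally, for the ``in particular'' clause I would observe that the hypothesis $|G|\leq\aleph_1$ is used only to bound $|\im(f)|$, and that under CH one has $|\Aut^*(X,\mu)|\leq 2^{\aleph_0}=\aleph_1$ (there are only $2^{\aleph_0}$ Borel maps $X\to X$, hence that many measure-class preserving Borel automorphisms). Thus $\im(f)$ has cardinality at most $\aleph_1$ for \emph{any} group $G$, and the argument above applies unconditionally. I do not expect a genuine obstacle here: the statement is a direct translation of Theorem \ref{mainthm3}, and the only points requiring care are the two identifications of the second paragraph and the verification that the null ideal meets the uncountable-Borel-set hypothesis.
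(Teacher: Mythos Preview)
Your proposal is correct and follows exactly the approach the paper takes: the corollary is stated immediately after a remark observing that with $\mathcal J$ the ideal of $\mu$-null sets one has $\Aut(X,\mathcal J)=\Aut^*(X,\mu)$ and that Theorem \ref{mainthm3} then applies directly. You supply more detail than the paper (the verification of the uncountable-null-set hypothesis, the explicit passage through $\im(f)$, and the cardinality count under CH), but these are straightforward elaborations of the same argument.
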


\section{A Lebesgue measurable lifting from MA}

In this section we consider the easier problem of producing a {\it
Lebesgue measurable} lifting of $\id:\Aut(X,\mu)\to\Aut(X,\mu)$,
where $(X,\mu)$ denotes a standard Borel probability space. Since it
presents no added difficulty in the proof, we will consider the more
general problem of producing a lifting of
$\id:\Aut^*(X,\mu)\to\Aut^*(X,\mu)$, where $\Aut^*(X,\mu)$ denotes
the group of measure class preserving automorphisms of $(X,\mu)$.

Let $G_{\lm}(X,\mu)$ denote the set of all Lebesgue measurable
measure class preserving bijections of $(X,\mu)$ which have a
Lebesgue measurable inverse, and let
$$
I_{\lm}(X,\mu)=\{f\in G_{\lm}(X,\mu): f(x)=x \text{ a.e.}\}.
$$
Then $I_{\lm} (X,\mu)$ is a subgroup of $G_{\lm}(X,\mu)$ such that
$$
G_{\lm}(X,\mu)/I_{\lm}(X,\mu)\simeq \Aut^*(X,\mu),
$$
and we denote by $\varphi_{\lm}:G_{\lm}(X,\mu)\to\Aut^*(X,\mu)$ the
canonical epimorphism with kernel $I_{\lm}(X,\mu)$.

It is worthwhile making explicit the similar category version of
this. If $Y$ is a topological space, one may consider the group $G_{\bm}(Y)$
of all {\it Baire measurable} bijections (see \cite[8.I]{kechris95}) which have a Baire
measurable inverse, and which preserve the ideal of meagre sets.
Then, in analogy with the above, we may form
$$
I_{\bm}(Y)=\{f\in G_{\bm}(Y): \{x\in Y: f(x)=x\} \text{ is
comeagre}\}
$$
and the corresponding quotient group
$$
G_{\bm}(Y)/I_{\bm}(Y)= \Aut_{\bm}(Y).
$$
Let $\varphi_{\bm}:G_{\bm}(Y)\to\Aut_{\bm}(Y)$ be the canonical
epimorphism.

In this section we prove:

\begin{theorem}
Suppose Martin's Axiom, MA, holds. Then $\id: \Aut^*(X,\mu)\to
\Aut^*(X,\mu)$ splits over $\varphi_{\lm}$, i.e. there is
$h:\Aut(X,\mu)\to G_{\lm}(X,\mu)$ such that $\varphi_{\lm}\circ
h=\id$.\label{lebesgue}

Similarly, assuming MA, if $Y$ is a locally compact c.c.c. Hausdorff
space (e.g. $Y$ a locally compact Polish space) then $\id:\Aut_{\bm}(Y)\to\Aut_{\bm}(Y)$ splits over
$\varphi_{\bm}$.
\end{theorem}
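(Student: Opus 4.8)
The plan is to deduce both parts of the theorem from a single construction, exploiting that $\Aut^*(X,\mu)$ and $\Aut_{\bm}(Y)$ are both instances of the quotient $\Aut(X,\mathcal J)$ studied in \S2, with $\mathcal J$ the null ideal in the first case and the meagre ideal in the second. First I would reduce everything to splitting the identity: if $h_0\colon\Aut^*(X,\mu)\to G_{\lm}(X,\mu)$ is a homomorphic right inverse of $\varphi_{\lm}$, then for \emph{any} homomorphism $f\colon G\to\Aut^*(X,\mu)$ the composite $h_0\circ f$ is a measurable spatial model of the associated near-action, and likewise through $\varphi_{\bm}$; so it suffices to split $\id$. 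Since $|\Aut^*(X,\mu)|=\mathfrak c$, the length-$\omega_1$ induction behind Theorem \ref{mainthm3} is unavailable, and — crucially — the master-action \emph{reservoir} cannot be salvaged: for an uncountable group the Bernoulli space carrying the universal action of Lemma \ref{universal} is no longer standard Borel and cannot be embedded as a measurable subset of $X$, so there is no room to ``absorb'' the null sets thrown off by Lemma \ref{extend}. A different mechanism is therefore required.

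The mechanism I would use is a \emph{lifting} of the quotient algebra. Let $\pi\colon\mathcal B(X)\to\mathcal B(X)/\mathcal J$ be the quotient map and let $\ell\colon\mathcal B(X)/\mathcal J\to\mathcal B(X)$ be a Boolean homomorphism with $\pi\circ\ell=\id$; in the measurable case this is the classical lifting theorem, while in the category case the production of $\ell$ is itself where MA enters. For $x\in X$ put $u_x=\{A:x\in\ell(A)\}$, an ultrafilter on the algebra, and for $T\in\Aut^*(X,\mu)$ acting on the algebra by $\hat T$ define
$$
h(T)(x)=\text{the unique }y\text{ such that }\bigl(\,y\in\ell(A)\iff x\in\ell(\hat T^{-1}(A))\,\bigr)\text{ for all }A.
$$
The virtue of this definition is that the homomorphism property is automatic: one checks that $u_{h(T)(x)}=\hat T(u_x)$, whence $h(S)\circ h(T)$ and $h(ST)$ both carry $x$ to the unique point realizing $\widehat{ST}(u_x)$ and so agree \emph{everywhere}; and since $\ell(A)=_{\mathcal J}A$ one reads off $\varphi_{\lm}(h(T))=T$ and that $h(T^{-1})$ inverts $h(T)$. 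Thus the only thing that can fail is the existence and uniqueness of the point $y$, i.e.\ whether the pushed-forward ultrafilter $\hat T(u_x)$ is realized by a single point of $X$.

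I expect this point-realization to be the main obstacle, and it is where the strength of Martin's Axiom is used essentially. The family $\{\ell(\hat T(B)):B\in u_x\}$ has the finite intersection property, but its intersection must be extracted from a family of size $\mathfrak c$, so the additivity $\operatorname{add}(\mathcal J)=\mathfrak c$ furnished by MA does not apply to it directly, and moreover the naive domain $\bigcap_T\{x:h(T)(x)\text{ defined}\}$ is an intersection of $\mathfrak c$-many co-$\mathcal J$ sets. The real work is to produce a \emph{single} $\Aut^*$-invariant set $X_0\in\co(\mathcal J)$ on which $h(T)$ is a well-defined bijection for \emph{every} $T$ at once; one then sets $h(T)=\id$ on $X\setminus X_0$, and the homomorphism identities, holding on the invariant set $X_0$ and trivially (both sides being $\id$) on its complement, hold on all of $X$. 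I would obtain $X_0$ by combining the separability (countable generation) of $\mathcal B(X)/\mathcal J$ with a c.c.c.\ Martin's-Axiom argument over the countable generators, diagonalizing the $\mathfrak c$-many realization requirements into one intrinsic genericity property of $x$, and then showing that the set of ``generic'' $x$ has full measure and is invariant.

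The category statement should follow with only cosmetic changes: replace the null ideal $\mathcal N$ by the meagre ideal $\mathcal M$, ``co-null'' by ``comeagre,'' and $\operatorname{add}(\mathcal N)=\mathfrak c$ by $\operatorname{add}(\mathcal M)=\mathfrak c$; the local compactness and c.c.c.\ hypotheses on $Y$ are exactly what make the Baire algebra $G_{\bm}(Y)/I_{\bm}(Y)$ separable and well-behaved enough for the same lifting and the same realization argument to apply verbatim. In summary, the homomorphism is handed to us for free by the functoriality of the lifting, and the entire content of the theorem is concentrated in the measure- (respectively category-) theoretic realization step, which is precisely what MA is invoked to settle.
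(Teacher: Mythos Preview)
Your proposal rests on two claims that are both problematic, and the paper's proof shows the first claim to be simply false.

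\textbf{The inductive approach is not unavailable.} You assert that a transfinite induction of length $\mathfrak c$ cannot work because the Bernoulli space $(2^\N)^G$ for uncountable $G$ is no longer standard Borel and ``cannot be embedded as a measurable subset of $X$''. But in the Lebesgue-measurable setting we are not asking for a Borel embedding; we only need a set-theoretic bijection between $(2^\N)^G$ and some null subset of $X$, and every map out of a null set is Lebesgue measurable. Under MA one has $2^{|G|}=2^{\aleph_0}$ whenever $|G|<\mathfrak c$, so the shift space has size continuum and such a bijection exists. This is exactly what the paper does: it proves the more general Theorem~\ref{lebesgue2} by an induction of length $\kappa=|H|\le\mathfrak c$, using generalizations of Lemmas~\ref{extend} and~\ref{universal} (Lemmas~\ref{extend2} and~\ref{universal2}) in which MA is invoked \emph{only} to guarantee that unions of fewer than $\kappa$ null (meagre) sets are null (meagre), and that $2^\lambda=2^{\aleph_0}$ for $\lambda<\kappa$. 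The master-action machinery of \S2 is not needed at all; the reservoir is just a partition of a null set into $\kappa$ pieces of size $\mathfrak c$.

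\textbf{The lifting approach has a real gap at exactly the point you flag.} Even granting a Boolean lifting $\ell$, there is no reason the ultrafilter $\hat T(u_x)$ should be realized by a point of $X$: the Stone space of $\mathcal B(X)/\mathcal J$ has $2^{\mathfrak c}$ points, of which only $\mathfrak c$ come from $X$ via $x\mapsto u_x$, and the induced homeomorphism $\hat T$ has no reason to preserve this small subset. Your proposed rescue via countable generators does not work, because $\ell$ is only a \emph{finitely} additive homomorphism: knowing $x\in\ell(B_n)$ for a countable generating family $\{B_n\}$ does not determine membership in $\ell(A)$ for general $A$, so controlling countably many conditions does not collapse the $\mathfrak c$-many realization constraints into a single measurable genericity condition. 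And as you yourself observe, $\operatorname{add}(\mathcal J)=\mathfrak c$ cannot handle an intersection of $\mathfrak c$ co-$\mathcal J$ sets. The ``diagonalizing'' step is where the argument would have to be, and nothing concrete is offered there.

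In short: the paper's route is precisely the one you dismissed, and it goes through cleanly because in the measurable (as opposed to Borel) category the reservoir can live in a null set and be manipulated by arbitrary bijections.
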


Theorem \ref{lebesgue} follows from the following more general
statement:

\begin{theorem}
Suppose $\kappa\leq 2^{\aleph_0}$ is a cardinal and $\ma(\lambda)$
holds for all cardinals $\lambda<\kappa$. Then for any group $H\leq
\Aut^*(X,\mu)$ of cardinality $\kappa$ there is a homomorphism $h:
H\to G_{\lm}(X,\mu)$ such that $\varphi_{\lm}\circ h=\id_H$.

Similarly for the category case.\label{lebesgue2}
\end{theorem}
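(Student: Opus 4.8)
The plan is to prove Theorem \ref{lebesgue2} by a transfinite induction along a well-ordering of $H$ of order type $\kappa$, in close parallel to the structure of the proof of Theorem \ref{mainthm3}, but with the crucial difference that the "reservoir" construction is replaced by a forcing/genericity argument that exploits $\ma(\lambda)$. Fix an enumeration $(T_\alpha : \alpha < \kappa)$ of $H$ and set $H_\alpha = \langle T_\beta : \beta < \alpha\rangle$, so that each $H_\alpha$ has cardinality $|\alpha| < \kappa$. I would build homomorphisms $h_\alpha : H_\alpha \to G_{\lm}(X,\mu)$ with $\varphi_{\lm}\circ h_\alpha = \id$, coherent under restriction ($h_\beta = h_\alpha \restrict H_\beta$ for $\beta < \alpha$), and take $h = \bigcup_{\alpha<\kappa} h_\alpha$ at the end. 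The limit stages are handled by taking unions, exactly as in Case 1 of the proof of Theorem \ref{mainthm3}; all the work is in the successor step.

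At a successor stage $\alpha = \beta+1$, I am given a measurable action $\rho_\beta : H_\beta \actson X$ lifting the homomorphism into $\Aut^*(X,\mu)$, together with a representative $\theta_0 \in G_{\lm}(X,\mu)$ of $T_\beta$. The analogue of Lemma \ref{extend} produces a conull (in the category case, comeagre) invariant set on which $\rho_\beta$ extends to $H_\alpha$, but this set is only \emph{co-null}, not co-$\mathcal{J}$ for a genuine $\sigma$-ideal in the Borel sense, so I cannot rerun the Bernoulli-reservoir argument of Lemma \ref{onestep} verbatim. Instead, the key point is that I only need the extension to be \emph{Lebesgue} (respectively Baire) measurable, and the obstruction to extending $\rho_\beta$ to a genuine $H_\alpha$-action lives on a \emph{null} (meagre) set. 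The plan is to fill in the action on this small set by a generic choice: I would set up a partial order $\P$ of finite (or countable) approximations to a measurable extension of the conjugacy data, where conditions encode partial injections respecting the group relations of $H_\alpha$ on the bad set, and the genericity demanded is countable in $|\alpha| < \kappa$ many dense sets. Applying $\ma(|\alpha|)$ (which holds since $|\alpha| < \kappa$) to a filter meeting these dense sets yields the required extension, with measurability coming from the c.c.c. of the forcing and the fact that a Lebesgue-measurable (Baire-measurable) set differs from a Borel set by a null (meagre) set.

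\textbf{The main obstacle} I expect is verifying that the relevant poset is c.c.c.\ (needed so that $\ma$ applies and so that the resulting object is measurable) while simultaneously having enough density to force both the bijectivity of each $h_\alpha(T)$ and the homomorphism relations $h_\alpha(ST) = h_\alpha(S)h_\alpha(T)$ on the nose. Concretely, the homomorphism condition is not a single constraint but must be met for \emph{all} pairs $S,T \in H_\alpha$, giving $|\alpha|$ many families of dense sets; one must check these families are mutually compatible and that meeting them does not force uncountably many incompatible commitments. The category version should run in parallel, using that a locally compact c.c.c.\ Hausdorff space carries a suitable Baire-category analogue of the measure-theoretic machinery, and that the meagre ideal supports the same finite-approximation forcing; the c.c.c.\ hypothesis on $Y$ is exactly what guarantees the poset of approximations remains c.c.c.\ so that $\ma$ is applicable. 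Once the successor step is established, the inductive assembly and the passage from Theorem \ref{lebesgue2} to Theorem \ref{lebesgue} (taking $\kappa = 2^{\aleph_0}$ and using that $\ma$ grants $\ma(\lambda)$ for all $\lambda < 2^{\aleph_0}$) are routine.
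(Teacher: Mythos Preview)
Your overall inductive framework (enumerate $H$, set $H_\alpha=\langle T_\beta:\beta<\alpha\rangle$, take unions at limits, do the work at successors) matches the paper's. But your successor step diverges sharply from what the paper actually does, and the divergence introduces a genuine gap.

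The paper uses \emph{no} forcing or genericity argument. As the remark immediately following the statement of Theorem~\ref{lebesgue2} makes explicit, the only role of $\ma(\lambda)$ is the additivity of the null (meagre) ideal: fewer than $\kappa$ null (meagre) sets still have null (meagre) union. This is invoked in exactly two places. First, in Lemma~\ref{extend2}, one intersects the conull sets $Z_\Delta$ produced by Lemma~\ref{extend} over all countable subgroups $\Delta<H_\beta$ and still obtains a conull set. Second, in Lemma~\ref{universal2}, one uses $2^{|H_\alpha|}=2^{\aleph_0}$ (a consequence of $\ma(|H_\alpha|)$) so that the shift $(2^\N)^{H_\alpha}$ has the right cardinality to serve as a universal target for the $H_\beta$-action.

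Contrary to your diagnosis, the paper \emph{does} rerun a reservoir argument at the successor step; it is simpler than Lemma~\ref{onestep} precisely because no Borel structure is needed on null sets. Before the induction begins, $X$ is partitioned into pieces $(X_\alpha:\alpha<\kappa)$ with $\mu(X_0)=1$ and each remaining $X_\alpha$ null but of cardinality $2^{\aleph_0}$; these are the reservoirs. Given $\rho_\gamma$ and the extension $\rho^0$ on a conull invariant $Z$ (from Lemma~\ref{extend2}), Lemma~\ref{universal2} injects the bad null set $Y_\gamma\setminus Z$ $H_\gamma$-equivariantly into the shift $(2^\N)^{H_\alpha}$; one then extends this injection to a bijection $\bar\psi$ onto all of $(2^\N)^{H_\alpha}$ by absorbing some $W\subseteq X_{\gamma+1}$ from the reservoir, and pulls the full $H_\alpha$-shift back through $\bar\psi$ to obtain $\rho^1$. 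Then $\rho_\alpha=\rho^0\cup\rho^1$.

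Your proposed c.c.c.\ poset of finite or countable partial approximations is, as described, insufficient. The null set $Y_\gamma\setminus Z$ on which the action must be repaired may have cardinality $2^{\aleph_0}$, while a filter meeting fewer than $\kappa$ dense sets in a poset of finite conditions determines values at fewer than $\kappa$ points; it cannot by itself produce a total $H_\alpha$-action on that set, let alone a bijective one. You also give no argument for the c.c.c., and ``measurability coming from the c.c.c.'' is not a mechanism that applies here (you are not adjoining a subset of $X$ but building point maps on a null set, where measurability is automatic). The reservoir-plus-shift argument handles the successor step directly and avoids all of this.
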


{\it Remark.} The only role played by Martin's Axiom in the proof of
the above is to ensure that the union of fewer than $\kappa$ null
sets (meagre sets) in the standard Borel probability space $(X,\mu)$
(locally compact c.c.c. Hausdoff space $Y$) is again a null set
(meagre set), see \cite[Theorem 2.21 and 2.22]{kunen80}.

\medskip

Before proving \ref{lebesgue2}, we need two straightforward
generalizations of Lemma \ref{extend} and \ref{universal}.

\begin{lemma}
Let $\kappa< 2^{\aleph_0}$ and suppose that $\ma(\kappa)$ holds. Let
$H<\Aut^*(X,\mu)$ be a group of cardinality $\kappa$, and let
$T_0\in\Aut^*(X,\mu)\setminus H$. Suppose $\rho: H\times X\to X$ is
an action of $H$ by Lebesgue measurable bijections such that
$\varphi_{\lm}(\rho(T,\cdot))=T$ for all $T\in H$. If $\theta_0\in
G_{\lm}(X,\mu)$ is such that $\varphi_{\lm}(\theta_0)=T_0$ then
there is $Z\subseteq X$ such that $\mu(Z)=1$, $Z$ is invariant under
all $\theta_0$ and $\rho(T,\cdot)$ for all $T\in H$, and there
is an action $\tilde \rho:\langle H\cup\{T_0\}\rangle\times Z\to Z$
such that $\tilde\rho(T_0,\cdot)=\theta_0\restrict Z$ and $\tilde
\rho\restrict H\times Z = \rho\restrict H\times Z$.

The same statement holds, mutatis mutandis, in the Baire category
case.\label{extend2}
\end{lemma}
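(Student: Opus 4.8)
The plan is to imitate the proof of Lemma \ref{extend} but replace the Borel invariant set $Z\in\co(\mathcal J)$ by a conull (comeagre) measurable set, using $\ma(\kappa)$ to keep the relevant union of null (meagre) sets small. First I would set up the candidate for $Z$ exactly as before: given the fixed lift $\theta_0$ with $\varphi_{\lm}(\theta_0)=T_0$, I want the set of $x$ such that for every reduced word
$$
w=T_0^{n_0}\gamma_1 T_0^{n_1}\cdots\gamma_k T_0^{n_{k+1}}=\id
$$
in $\langle H\cup\{T_0\}\rangle$ (with $\gamma_i\in H$, $n_i\in\{-1,0,1\}$), the corresponding composition of the Lebesgue measurable maps $\theta_0^{n_0}\rho(\gamma_1,\cdot)\cdots\theta_0^{n_{k+1}}$ fixes $x$. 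For each such word that evaluates to the identity in $\langle H\cup\{T_0\}\rangle$, the hypothesis $\varphi_{\lm}(\rho(\gamma,\cdot))=\gamma$ and $\varphi_{\lm}(\theta_0)=T_0$ guarantees that the composed map equals $\id$ almost everywhere; hence the set $N_w$ of points it moves is null (meagre). Let $Z=X\setminus\bigcup_w N_w$, where $w$ ranges over the (at most $\kappa$ many) relations in the group.

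The main obstacle is precisely bounding the size of this union of exceptional sets, and this is where $\ma(\kappa)$ enters. Since $|H|=\kappa<2^{\aleph_0}$, the set of finite words over $H\cup\{T_0,T_0^{-1}\}$ has cardinality $\kappa$, so there are at most $\kappa$ relations $w$ and hence at most $\kappa$ null (meagre) sets $N_w$. By the remark following Theorem \ref{lebesgue2}, under $\ma(\kappa)$ the union of $\kappa$ many null sets is null (and $\kappa$ many meagre sets is meagre); I would cite \cite[Theorem 2.21 and 2.22]{kunen80} for this. Therefore $\bigcup_w N_w$ is null (meagre), so $Z$ is conull (comeagre) and Lebesgue (Baire) measurable. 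One subtlety to verify is that $Z$ is genuinely invariant under $\theta_0$ and each $\rho(\gamma,\cdot)$: this holds because the defining family of relations $w$ is closed under left multiplication by the generators (a relation $gw=\id$ is equivalent to $w=g^{-1}$, and the corresponding move-sets transform into one another under the maps in question), so applying a generator permutes the sets $N_w$ and fixes their union; I would check this closure condition carefully rather than assert it.

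Having secured an invariant conull (comeagre) measurable $Z$, the definition of $\tilde\rho$ and the verification that it is well defined proceed verbatim as in Lemma \ref{extend}: for $T\in\langle H\cup\{T_0\}\rangle$ written as a word in the generators, set
$$
\tilde\rho(T,z)=\theta_0^{n_0}\rho(\gamma_1,\cdot)\theta_0^{n_1}\cdots\rho(\gamma_k,\cdot)\theta_0^{n_{k+1}}(z),
$$
and independence of the chosen word follows from membership in $Z$ exactly as before, since any two words representing the same $T$ differ by a relation $w$ and $z\notin N_w$. This yields an action of $\langle H\cup\{T_0\}\rangle$ on $Z$ by Lebesgue measurable (Baire measurable) bijections extending $\rho\restrict H\times Z$ with $\tilde\rho(T_0,\cdot)=\theta_0\restrict Z$, as required. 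The Baire category case is identical, substituting ``meagre'' for ``null'' and ``comeagre'' for ``conull'' throughout, and using the c.c.c.\ hypothesis on $Y$ only insofar as it is needed for the Martin's Axiom application to meagre sets.
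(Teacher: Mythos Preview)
Your argument is correct and rests on the same idea as the paper's: use $\ma(\kappa)$ to ensure that the union of the exceptional null (meagre) sets coming from the relations in $\langle H\cup\{T_0\}\rangle$ is still null (meagre), and then define $\tilde\rho$ on the remaining invariant conull set exactly as in Lemma \ref{extend}.

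The paper organizes the bookkeeping slightly differently: rather than enumerating the $\kappa$ many relation words directly, it first observes that the countable case is literally Lemma \ref{extend}, and for uncountable $H$ it takes, for each countable subgroup $\Delta<H$, a conull $Z_\Delta$ witnessing the lemma for $\Delta$, and then sets $Z=\bigcap_\Delta Z_\Delta$ and $\tilde\rho=\bigcup_\Delta \tilde\rho_\Delta\restrict Z$, invoking $\ma(\kappa)$ to keep $\mu(Z)=1$. Your direct enumeration of the relation words is a bit cleaner, since the cardinality bound $|\{\text{words}\}|\leq\kappa$ is immediate, whereas the family of \emph{all} countable subgroups of $H$ can have size $\kappa^{\aleph_0}$; one must read the paper's intersection as being over a suitable $\kappa$-sized cofinal family (or, equivalently, observe that the distinct $Z_\Delta$'s are determined by the at most $\kappa$ relations). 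Either way the content is the same.
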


\begin{proof}
By the same argument given for \ref{extend}, the Lemma holds for $H$
countable. If $H$ is not countable, then for each countable subgroup
$\Delta<H$ we can find $Z_\Delta\subseteq X$ with $\mu(Z_\Delta)=1$
and satisfying the Lemma for $\Delta$ and $\rho\restrict\Delta\times
X$. Let $\tilde \rho_{\Delta}:\Delta\times Z_\Delta\to Z_\Delta$
witness this. If we let
$$
Z=\bigcap \{Z_\Delta:\Delta<H\ \text{ is a countable
subgroup}\}
$$
then since $\ma(\kappa)$ holds we have $\mu(Z)=1$ and
$$
\tilde \rho=\bigcup\{\tilde\rho_{\Delta}\restrict \Delta\times
Z:\Delta<H \text{ is a countable subgroup}\}
$$
is as required.
\end{proof}

The notion of shift action $\beta: H\actson X^G$ generalizes easily
to uncountable groups $H\leq G$.

\begin{lemma}
Let $\aleph_0\leq \kappa<2^{\aleph_0}$ and assume $\ma(\kappa)$
holds. Let $H\leq G$ be groups, $|G|=\kappa$, and let $X=2^\N$.
Then the shift action $\beta: H\actson X^G$ has the following
universality property: If $Y$ is a set, $|Y|\leq 2^{\aleph_0}$, and
$\sigma: H\actson Y$ is an action of $H$ on $Y$, then there is an
injection $\psi:Y\to X^G$ such that for all $h\in H$,
$$
\psi(\sigma(h)(x))=\beta(h)(\psi(x)).
$$\label{universal2}
\end{lemma}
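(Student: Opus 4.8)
Prove Lemma \ref{universal2}: for $H \leq G$ groups with $|G| = \kappa$ (where $\aleph_0 \leq \kappa < 2^{\aleph_0}$ and $\ma(\kappa)$ holds), and any action $\sigma: H \actson Y$ with $|Y| \leq 2^{\aleph_0}$, there is an injection $\psi: Y \to (2^\N)^G$ conjugating $\sigma$ to the shift $\beta$.

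Let me think about how the countable case (Lemma \ref{universal}) worked, and what obstruction arises in the uncountable case.

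In Lemma \ref{universal}, the construction was:
- Take a sequence $(B_n)_{n \in \N}$ of Borel sets separating points of $Y$.
- Define $\psi(x)(h)(n) = 1$ iff $\sigma(h^{-1})(x) \in B_n$.
- Verify $\psi(\sigma(g)(x))(h)(n) = 1 \iff \sigma(h^{-1}g)(x) \in B_n \iff \psi(x)(g^{-1}h)(n) = 1 \iff \beta(g)(\psi(x))(h)(n)=1$.
- Injectivity came from the separating property.

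Now I need to think about what breaks in the uncountable case and what MA gives us.

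**What changes.** First, $Y$ is now just a *set* (no Borel structure), $|Y| \leq 2^{\aleph_0}$, and $\psi$ is just an *injection* (no measurability/Borel requirement). Second, $G$ (hence $H$) is uncountable, so $X^G = (2^\N)^G$ is a product over an uncountable index set, but each coordinate is still $2^\N$.

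The key computation in Lemma \ref{universal} is purely algebraic and doesn't use countability of $H$ or $G$ — it works for any groups. So the conjugation identity
$$\psi(\sigma(g)(x))(h)(n) = 1 \iff \sigma(h^{-1}g)(x) \in B_n \iff \beta(g)(\psi(x))(h)(n) = 1$$
goes through verbatim, *provided* I have a family of sets $(B_n)_{n \in \N}$ that separates points of $Y$.

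**The obstruction.** The problem is injectivity, i.e. finding a countable family $(B_n)_{n\in\N}$ of subsets of $Y$ that separates points. A countable family of subsets of $Y$ separates points iff the induced map $Y \to 2^\N$, $y \mapsto (\mathbf{1}_{B_n}(y))_{n}$, is injective — which forces $|Y| \leq |2^\N| = 2^{\aleph_0}$. We're given exactly $|Y| \leq 2^{\aleph_0}$. So a countable separating family *does* exist (just fix any injection $Y \hookrightarrow 2^\N$ and let $B_n$ be the preimage of the $n$-th coordinate cylinder).

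Wait — if a countable separating family always exists whenever $|Y| \leq 2^{\aleph_0}$, then the whole lemma goes through *without MA*, just copying the proof of Lemma \ref{universal} but with arbitrary sets $B_n$ instead of Borel ones and no measurability claim on $\psi$. Let me reconsider where $\ma(\kappa)$ could be needed.

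**Reconsidering.** Re-examine: in Lemma \ref{universal} the $B_n$ were Borel because $\psi$ needed to be Borel. Here $\psi$ is merely an injection, so the $B_n$ can be arbitrary. An injection $Y \hookrightarrow 2^\N$ exists precisely because $|Y| \leq 2^{\aleph_0}$; MA plays no role in producing it. So I suspect the role of $\ma(\kappa)$ in the statement is vestigial/for uniformity with Lemma \ref{extend2}, and the actual proof needs no set theory beyond $|Y| \leq 2^{\aleph_0}$.

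**Proof plan.**

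\begin{proof}[Proof sketch of Lemma \ref{universal2}]
The proof is essentially identical to that of Lemma \ref{universal}, the only difference being that $\psi$ need no longer be Borel. Since $|Y| \leq 2^{\aleph_0} = |2^\N|$, we may fix an injection $j: Y \to 2^\N$, and for each $n \in \N$ let
$$
B_n = \{y \in Y : j(y)(n) = 1\}.
$$
The family $(B_n)_{n \in \N}$ separates the points of $Y$: if $y \neq y'$ then $j(y) \neq j(y')$, so they differ in some coordinate $n$, whence exactly one of $y, y'$ lies in $B_n$. Now define $\psi: Y \to (2^\N)^G$ by
$$
\psi(x)(h)(n) = \left\{ \begin{array}{ll} 1 & \textrm{if } \sigma(h^{-1})(x) \in B_n, \\ 0 & \textrm{otherwise,} \end{array} \right.
$$
for $h \in H$; for $g \in G \setminus H$ set $\psi(x)(g)$ to be any fixed constant, say the all-zero sequence. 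The same computation as in Lemma \ref{universal},
\begin{align*}
&\psi(\sigma(g)(x))(h)(n) = 1 \iff \sigma(h^{-1}g)(x) \in B_n \iff \\
&\psi(x)(g^{-1}h)(n) = 1 \iff \beta(g)(\psi(x))(h)(n) = 1,
\end{align*}
valid for all $g \in H$, shows that $\psi$ conjugates $\sigma$ to $\beta$ on the $H$-coordinates, which is all that $\beta$ moves. Finally, since $(B_n)$ separates points, $\psi$ is injective.
\end{proof}

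**Main obstacle / caveat.** The one subtlety to be careful about is the handling of coordinates $g \in G \setminus H$: the shift $\beta(h)$ permutes coordinates by left-multiplication, and for $h \in H$ this permutation maps $H$ into $H$ but can move coordinates outside $H$ as well. I should check that the conjugation identity is consistent on *all* coordinates, not just those in $H$; the cleanest fix is to define $\psi(x)(g)(n) = 1 \iff \sigma_{\text{ext}}(g^{-1})(x) \in B_n$ using some extension, but since $\sigma$ is only an $H$-action, the honest statement is that conjugation holds for $g \in H$ (which is what the lemma asserts), and on coordinates outside $H$ one must simply verify that the chosen values are shift-compatible — this is where I'd take a moment to confirm that fixing $\psi(x)(g)$ for $g \notin H$ by a formula depending only on the coset $Hg$ (or cosets of $H$) keeps everything consistent, exactly as in the reduction "$H = G$" used at the start of the proof of Lemma \ref{universal}. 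That reduction — replacing $(2^\N)^G$ by $(2^\N)^H$ via a Borel-free bijection — is the right move here too, and it is the only place requiring genuine (if routine) care.
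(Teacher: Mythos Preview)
Your proof is correct and follows the same basic idea as the paper's: choose a countable separating family $(B_n)$ for $Y$ (available since $|Y|\leq 2^{\aleph_0}$) and run the formula from Lemma~\ref{universal}. The paper's one-line proof reads ``identical to that of Lemma~\ref{universal} once we note that $Y$ is countably separated and that by $\ma(\kappa)$, $2^\kappa=2^{\aleph_0}$.'' The second clause is used because the paper literally reproduces the first step of Lemma~\ref{universal}, namely the $H$-equivariant identification of $(2^\N)^G$ with $(2^\N)^H$; in the uncountable setting this requires $|(2^\N)^{[G:H]}|=2^{\aleph_0}$, which is where $2^\kappa=2^{\aleph_0}$ (hence $\ma(\kappa)$) enters.

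Your variant sidesteps that reduction entirely by assigning the constant zero sequence on coordinates $g\in G\setminus H$. This is legitimate: for $h\in H$ and $g\notin H$ one has $h^{-1}g\notin H$ as well, so both $\psi(\sigma(h)(x))(g)$ and $\psi(x)(h^{-1}g)$ equal the fixed constant and the conjugation identity holds on all of $G$. Consequently your proof does not use $\ma(\kappa)$ at all, and your suspicion that the hypothesis is vestigial for \emph{this} lemma is justified. Your closing caveat is therefore unnecessary: the constant-on-$G\setminus H$ definition already resolves the coset issue cleanly, and you need not fall back on the paper's reduction to $H=G$.
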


\begin{proof}
The proof is identical to that of Lemma \ref{universal} once we note
that $Y$ is countably separated and that by $\ma(\kappa)$,
$2^\kappa=2^{\aleph_0}$.
\end{proof}

\begin{proof}[Proof of Theorem \ref{lebesgue2}]
We will construct an action $\rho: H\times X\to X$ by Lebesgue
measurable bijections by induction on $\kappa=|H|$ such that
$$
\varphi_{\lm}(\rho(T,\cdot))=T
$$
for all $T\in H$. The proof is made easier compared to the Borel
case by the fact that during the inductive construction, we only
need to make sure that for each $T\in H$, $\rho(T,\cdot)$ is defined
and fixed on a set of full measure from some point on. Below, if a
Lebesgue measurable measure class preserving bijection $\theta: Y\to
Y$ is defined only on a subset $Y\subseteq X$ of full measure, we
will allow ourselves to apply $\varphi_{\lm}$ to $\theta$, by
identifying $\theta$ with the Lebesgue measurable bijection
$\bar\theta:X\to X$ that acts identically on $X\setminus Y$.

Let $(T_\alpha:\alpha<\kappa)$ be an enumeration of $H$. Let
$$
H_\alpha=\langle T_\beta:\beta<\alpha\rangle,
$$
where as usual $H_0=\{I\}$. We decompose the space $X$ into disjoint
pieces $(X_\alpha:\alpha<\kappa)$, such that $\mu(X_0)=1$, and
$|X_\alpha|=2^{\aleph_0}$ for all $\alpha<\kappa$. For
$\alpha<\kappa$, we will inductively define Lebesgue measurable sets
$Y_\alpha\subseteq X$ and actions $\rho_\alpha: H_\alpha\times
Y_\alpha\to Y_\alpha$ by Lebesgue measurable bijections such that
\begin{enumerate}
\item $Y_0=X_0$
\item $\mu(Y_\alpha)=1$
\item $\beta<\alpha\implies Y_\beta\subseteq Y_\alpha$
\item $\beta<\alpha\implies \rho_\alpha\restrict H_\beta\times Y_\beta=\rho_\beta$
\item $(\forall T\in H_\alpha)\varphi_{\lm}(\rho_\alpha (T,\cdot))=T$
\item $\alpha<\gamma\implies Y_\alpha\cap X_\gamma=\emptyset$
\end{enumerate}
If we succeed in doing so then $\rho:H\times X\to X$ defined by
$$
\rho(T,x)=\left\{ \begin{array}{ll}
\rho_\alpha(T,x) & \textrm{if } T\in H_\alpha, x\in Y_\alpha,\\
x & \textrm{otherwise}\\
\end{array} \right.
$$
is as required. So suppose $\rho_\gamma$ and $Y_\gamma$ have been
defined for all $\gamma<\alpha$.

\medskip

Case 1. $\alpha$ is a limit ordinal.

\medskip

Then we let $Y_\alpha=\bigcup_{\gamma<\alpha} Y_\gamma$ and
$$
\rho_\alpha=\bigcup_{\gamma<\alpha} \rho_\gamma.
$$

\medskip

Case 2. $\alpha=\gamma+1$.

\medskip

Let $\theta: Y_\gamma\to Y_\gamma$ be a bijection such that
$\varphi_{\lm}(\theta)=T_\gamma$. Then by Lemma \ref{extend2} there
is some $Z\subseteq Y_\gamma$ such that $\mu(Z)=1$ and $Z$ is
invariant under $\theta$ and $\rho_\gamma(T,\cdot)$ for all $T\in
H_\gamma$, and an action $\rho^0: H_\alpha\times Z\to Z$ such that
$\rho^0(T_\gamma,\cdot)=\theta\restrict Z$ and
$$
\rho^0\restrict H_\gamma\times Z=\rho_\gamma\restrict H_\gamma\times
Z.
$$
Using Lemma \ref{universal2} we may find an injection $\psi:
Y_\gamma\setminus Z\to (2^\N)^{H_\alpha}$ such that
$$
\psi(\rho_\beta(T,x))=\beta(T,\psi(x))
$$
for all $x\in Y_\gamma\setminus Z$, where $\beta$ denotes the
shift-action of $H_\alpha$ on $(2^\N)^{H_\alpha}$. Extend $\psi$ to
a bijection $\bar\psi: W\cup Y_\gamma\setminus Z\to
(2^\N)^{H_\alpha}$  for some $W\subseteq X_{\gamma+1}$ such that $\bar\psi\restrict Y_\gamma\setminus
Z=\psi$, and define $\rho^1: H_\alpha\times W\cup Y_\gamma\setminus
Z\to W\cup Y_\gamma\setminus Z$ by
$$
\rho^1(T,x)=\bar\psi^{-1}(\beta(T,\bar\psi(x))).
$$
If we then let
$$
\rho_{\alpha}=\rho^0\cup\rho^1
$$
and $Y_\alpha=Y_\gamma\cup W$ then (1)--(6) above are satisfied.

The proof of the category case is similar.
\end{proof}

\section{Open problems}

Throughout this section, $(X,\mu)$ will denote a standard Borel
probability space.

In light of the results of this paper, as well as the strong analogy
between the lifting problem for $\Aut(X,\mu)$ (Question
\ref{mainquestion}) and the lifting problem for the measure algebra,
there are several open problems that suggest themselves. Surely the
most important open problem is the following:

\begin{problem}
Is it consistent with ZFC that $\id:\Aut(X,\mu)\to\Aut(X,\mu)$ does
\emph{not} split, i.e. that there is \emph{no} homomorphic lifting
$h:\Aut(X,\mu)\to G_{\mep}(X,\mu)$?\label{mainproblem}
\end{problem}

The analogous question for the measure algebra was settled in the
affirmative by Shelah, \cite{shelah83} and \cite{shelahpif}, and so
one naturally expects that Problem \ref{mainproblem} also has a
positive answer. This view seems to be further supported by the
result of Glasner, Tsirelson and Weiss \cite{gltswe05} that no
non-trivial Borel measure preserving near-action of $\Aut(X,\mu)$ on a
standard Borel probability space can have a Borel spatial
model.

It is worthwhile pointing out that the following na\"ive approach to
Problem \ref{mainproblem} \emph{does not} work: Let $\lambda$ denote
the Lebesgue measure on $\R$. For each $A\subseteq [0,1)$ Borel,
associate to it the measure preserving bijection $T_A:[0,2)\to
[0,2)$ given by
$$
T_A(x)=\left\{ \begin{array}{ll}
x+1 & \textrm{if $x\in A$}\\
x-1 & \textrm{if $x-1\in A$}\\
x & \textrm{otherwise.}
\end{array} \right.
$$
Then for each $[A]\in\malg([0,1),\lambda)$ this defines a unique
element of $\bar\varphi(T_A)$ in $\Aut([0,2),\lambda)$, and if we
consider $\malg([0,1),\lambda)$ as a group under the symmetric
difference operation, then $[A]\mapsto \bar\varphi(T_A)$ is a group
homomorphism.

One might now have hoped that if there was a lifting of
$\Aut([0,2),\lambda)$ then the above construction would lead to a
lifting of $\malg([0,1),\lambda)$. Ilijas Farah has pointed out that
this is not so, because \emph{as a group, $\malg([0,1),\lambda)$
always has a lifting}. This may be seen, for instance, by an easy
induction.

\medskip

The failure of this approach does prompt the following question:

\begin{problem}
What is the relationship between the lifting problem for
$\Aut(X,\mu)$ and the lifting problem for $\malg(X,\mu)$? In
particular, does a lifting for one imply a lifting for the other?
\end{problem}

The proof of Glasner, Tsirelson and Weiss relies on that the spatial
model implementing the given near-action is (jointly) Borel, i.e.
that the spatial action $\rho:G\times X\to X$ is Borel as a map from
$G\times X$ to $X$. The following natural question was brought up by
Solecki:

\begin{problem}
Does it follow from CH that every near-action has a
\emph{separately} Borel spatial model, i.e. a pointwise implementing
action $\rho:G\times X\to X$ such that for all $x\in X$ the map
$g\mapsto \rho(g,x)$ is Borel, and for all $g\in G$ the map
$x\mapsto \rho(g,x)$ is Borel?
\end{problem}

The proof of Theorem \ref{mainthm1} does not seem to give any
information about this. Solecki's question also brings up the more
general question of what sort of regularity properties are necessary
for Glasner-Tsirelson-Weiss' result to go through. As a ``test
case'' for Problem \ref{mainproblem} it would be of interest to know
the answer to the following:

\begin{problem}
Does it follow from AD, the Axiom of Determinacy (see e.g.
\cite{kechris95} or \cite{moschovakis80}), that there is \emph{no}
homomorphic lifting $h:\Aut(X,\mu)\to G_{\mep}(X,\mu)$? More
generally, does it follow from AD that no Levy group can act
pointwise in a non-trivial measure preserving way?
\end{problem}

Another question that arise from the proof of Theorem \ref{mainthm3}
is the problem of Borel hierarchy complexity of the automorphisms
constructed for the lifting. It is clear that nothing in the
construction produces a bound on the rank in the Borel hierarchy of
the Borel bijections produced. So we ask:

\begin{problem}
Is it consistent with ZFC to have a lifting $h:\Aut(X,\mu)\to
G_{\mep}(X,\mu)$ such that for some $\gamma<\omega_1$,
$h(T)\in\mathbf\Pi^0_\gamma$ for all $T\in\Aut(X,\mu)$? That is, can
we produce a lifting where all the Borel automorphisms have a bounded
Borel rank? If yes, does the existence of such a lifting follow from
CH?\label{bounded}
\end{problem}

The problem is analogous to a problem of A.H. Stone, see Problem
DO.c in \cite{fremlinproblems}, which asks the same for the lifting
problem for $\malg(X,\mu)$. This is (to my knowledge) still open for
$\malg(X,\mu)$.

\medskip

As mentioned in the introduction, Carlson, Frankiewicz and Zbierski
have shown that if we add $\aleph_2$ Cohen reals (or random reals)
to a model of CH, then $\malg(X,\mu)$ still has a lifting into
$\mathcal B(X)$. In particular, it is consistent with $\neg$CH to
have a lifting of $\malg(X,\mu)$. We ask analogously:

\begin{problem}
Is it consistent with $\neg$CH to have a lifting of $\Aut(X,\mu)$?
In particular, if we add $\aleph_2$ Cohen reals (or random reals) to
a model of CH, do we still have a lifting of $\Aut(X,\mu)$?
\end{problem}

\medskip

Our next problem concerns Theorem \ref{lebesgue}. In
\cite{maharam58}, Maharam shows that there always is a lifting of
$\malg(X,\mu)$ with \emph{Lebesgue measurable} sets. (Maharam
attributes this result to von Neumann.) In light of Theorem
\ref{lebesgue}, we therefore ask:

\begin{problem}
Does it follow from ZFC alone that there is a Lebesgue measurable
lifting of $\Aut(X,\mu)$?
\end{problem}

Finally, David Fremlin has asked me the following questions, which I
regrettably do not know the answer to.

\begin{problem}
In Theorem \ref{mainthm3}, can we dispense with the hypothesis of
$\mathcal J$ containing at least one uncountable Borel set? In
particular, does Theorem \ref{mainthm3} hold if we take $\mathcal J$
to be the set of all countable subsets of $X$?
\end{problem}

Fremlin's next question concerns semigroups rather than groups. If
$X$ is a Polish space,  $\mathcal J$  a  $\sigma$-ideal of $\mathcal
B(X)$, $\mathfrak A=\mathcal B(X)/\mathcal J$  and  $G$  is a
countable semigroup of Boolean homomorphisms from  $\mathfrak A$ to
itself, then in \cite[344B]{fremlin3}, it is shown that  $G$ can be
represented by a family  $f_{\pi}$ of Borel functions from $X$ to
itself such that $f_{\pi\phi}=f_{\phi}f_{\pi}$ for all $\pi,\phi\in
G$. Fremlin asks:

\begin{problem}
Does this Theorem hold for semigroups $G$ of cardinality $\aleph_1$?
\end{problem}

\bibliographystyle{amsplain}
\bibliography{mplift}

\end{document}